\documentclass[a4paper]{article}

\usepackage{graphicx}
\usepackage{xcolor}
\usepackage[utf8]{inputenc}
\usepackage[T1]{fontenc}
\usepackage{amsmath}
\usepackage{amssymb}
\usepackage{amsthm}
\usepackage{multirow}
\usepackage{subfigure}
\usepackage{a4}

\usepackage[normalem]{ulem}

\definecolor{GanzDklMgta}{rgb}{0.4,0.12,0.6}
\definecolor{DklCyan}{rgb}{0.12,0.47,0.58}

\newtheorem{ntn}{Notation}%[section]

\newtheorem{lemma}{Lemma}
\newtheorem{theorem}{Theorem}
\newtheorem{corollary}{Corollary}

\newtheorem{example}{Example}
\newcommand{\R}{{\rm I\hspace{-0.15em}R}}

\newcommand{\N}{{\rm I\hspace{-0.15em}N}}
\newcommand{\cU}{\mathcal{U}}

\newcommand{\F}{\mathcal{F}}
\newcommand{\X}{\mathcal{X}}
\newcommand{\G}{\mathcal{G}}
\newcommand{\SR}{{\rm S \hspace{-0.15em} R}}

\newcommand{\Peps}{(Rec($\varepsilon$))}
\newcommand{\Pdelta}{(Rec$^{{\rm class}}$($\delta$))}
\newcommand{\Geps}{\G_{\varepsilon}}
\newcommand{\reps}{r_{\varepsilon}}

\newcommand{\f}{{\bf f}}
\newcommand{\br}{{\bf r}}

\usepackage{authblk}
\usepackage{chngpage}
\parindent 0cm

\begin{document}

\title{A biobjective approach to robustness based on location planning \thanks{Partially supported by grants SCHO 1140/3-2 within the DFG programme {\it Algorithm Engineering}, and grant MTM2012-36163-C06-03.}}

\author[1]{Emilio Carrizosa}
\author[2]{Marc Goerigk\thanks{Corresponding author. Email: m.goerigk@lancaster.ac.uk}}
\author[3]{Anita Sch\"obel}

\date{}
\affil[1]{{\small Universidad de Sevilla, Spain}}
\affil[2]{{\small University of Lancaster, United Kingdom}}
\affil[3]{{\small Georg-August Universit\"at G\"ottingen, Germany}}

\maketitle

\begin{abstract}
Finding robust solutions of an optimization problem is an
important issue in practice, and various concepts on how to define the
robustness of a solution have been suggested. The idea of recoverable robustness
requires that a solution can be recovered to a feasible 
one as soon as the realized scenario becomes known.
The usual approach in the literature is to minimize the objective function value of the recovered solution
in the nominal or in the worst case.

As the recovery itself is also costly, there is a trade-off 
between the recovery costs and the solution value obtained;
we study both, the recovery costs and
the solution value in the worst case in a biobjective setting.

To this end, we assume that the recovery costs
can be described by a metric. We demonstrate that this leads to a location planning problem, bringing together two fields of research which have been considered separate so far.

We show how weakly Pareto efficient solutions to
this biobjective problem can be computed by minimizing the recovery costs for
a fixed worst-case objective function value and present approaches for the
case of linear and quasiconvex problems for finite uncertainty sets. 
We furthermore
derive cases in which the size of the uncertainty set can be reduced without
changing the set of Pareto efficient solutions.
\end{abstract}

\paragraph{Keywords} robust optimization; location planning; biobjective optimization

\section{Introduction}

Robust optimization is a popular paradigm to handle optimization problems contaminated with uncertain data, see, e.g., \cite{RObook,Aissi2009} and references therein.

Starting from conservative robustness models requiring that the robust solution
is feasible for any of the possible scenarios, new concepts have been developed,
see \cite{GoeSchoe13-AE} for a recent survey.
These concepts allow to relax this conservatism and to control the {\it price of robustness}, i.e.,
the loss of objective function value
one has to pay in order to obtain a robust solution, see \cite{BertSim04}.
In many real-world problems these new robustness concepts have been successfully
applied.

Motivated by two-stage stochastic programs, one class of such new models includes the so called 
{\it recoverable robustness} introduced
in \cite{LLMS09,CDDFN07}
and independently also used in \cite{Savel09}. Recoverable robustness is a two-stage approach that
does not require the robust solution to be feasible for all scenarios. Instead, a recoverable-robust
solution comes together with a \emph{recovery strategy} which is able to 
adapt the solution to make it feasible for every scenario.
Such a recovery strategy can be obtained by modifying the values of the
solution or by allowing another resource or spending additional budget, as soon as it becomes known
which scenario occurs. Unfortunately, a recoverable-robust
solution can only be determined efficiently for simple problems with special assumptions on the
uncertainties and on the recovery algorithms (see \cite{sebastiandiss}),
and the recoverable-robust counterpart is known to be NP-hard even in simple cases \cite{ARRIVAL-TR-0172}.

\smallskip

\textit{Our contributions.}
In this paper we analyze the two main goals in recoverable robustness: Obtaining a good objective
function value in the worst case while minimizing the recovery costs. We consider the $\varepsilon$-constrained version as a geometric problem, which allows to interpret robustness as a location planning problem,
and derive results on Pareto efficient solutions and how to compute them.

\smallskip

\textit{Overview.}
The remainder of the paper is structured as follows.

In the next section we sketch classic and
more recent robustness concepts before we
introduce the biobjective version of recoverable robustness
in Section~\ref{sec-recrob}.
We then analyze how to solve the scalarization of the recoverable-robust counterpart in Section~\ref{sec:solve},
and consider reduction approaches in Section~\ref{sec-infinite}.
After discussing numerical experiments in Section~\ref{sec:exp}, we conclude with a summary of results and an outlook to further research in Section~\ref{sec-conclusion}.

\section{Robustness concepts}
\label{sec-intro}

\subsection{Uncertain optimization problems}

We consider optimization problems that can be written in the form
\begin{eqnarray*}
\label{nominal-1}
\mbox{(P)} \hspace{1cm} & {\rm minimize} \ f(x) \\
& \mbox{s.t. }   F(x) \leq 0 \\
& x \in \X,
\end{eqnarray*}
where $\X \subseteq \R^n$ is a closed set, 
$F:\R^n \to \R^m$ describes the $m$ constraints
and $f:\R^n \to \R$ is the objective function to be minimized. We assume $f$ and 
$F$ to be continuous.
In practice, the constraints and the objective
may both depend on parameters which are in many cases not exactly known. 
In order to accommodate such uncertainties,
the following class of problems is considered instead of $\rm (P)$.

\begin{ntn} 
An \emph{uncertain optimization problem} is given as a parameterized family of optimization problems
\begin{eqnarray*}
\label{nominal-2}
\mbox{P($\xi$)} \hspace{1cm} & {\rm minimize} \  f(x,\xi) \\
& \mbox{s.t. }   F(x,\xi) \leq 0\\
& x \in \X,
\end{eqnarray*}
where $F(\cdot, \xi):\R^n \to \R^m$ and $f(\cdot, \xi):\R^n \to \R$
are continuous functions for any fixed $\xi\in\cU$, $\cU \subseteq \R^M$ being the 
\emph{uncertainty set} which contains all possible scenarios $\xi \in \R^M$ which may occur (see also \cite{RObook}). 
\end{ntn}

A scenario $\xi \in \cU$ fixes the parameters
of $f$ and $F$. It is often known that all scenarios that may occur 
lie within a given uncertainty set $\cU$, however, it
is not known beforehand which of the scenarios $\xi \in \cU$ will be realized.
We assume that $\cU$ is a closed set in $\R^M$ containing at least two
elements (otherwise, no uncertainty would affect the problem).
Contrary to the setting of stochastic optimization problems, we do not assume
a probability distribution over the uncertainty set to be known.
\medskip

The set $\X$ contains constraints which do not depend on the uncertain parameter $\xi$.
These may be technological or physical constraints on the variables
(e.g., some variables represent non-negative magnitudes, or there are
precedence constraints between two events), or may refer to modeling
constraints (e.g., some variables are Boolean, and thus they can only take
the values 0 and 1).
\medskip

In short, the uncertain optimization problem corresponding to P($\xi$) is denoted as
\begin{equation}
\label{uncertain}
 (\mbox{P($\xi$)}, \xi \in \cU).
\end{equation}
We denote
\[ \F(\xi) =  \{ x \in \X: F(x,\xi) \leq 0\} \]
as the feasible set of scenario $\xi \in \cU$ and
\[ f^*(\xi)=\min\{f(x,\xi): F(x,\xi) \leq 0, x \in \X \} \]
as the optimal objective function value for scenario $\xi$ 
(which might be $\infty$ if it does not exist).
Note that $\F(\xi)$ is closed in $\R^n$, as we assumed 
$\X$ to be closed, and $F(\cdot,\xi)$ to be continuous.
In the following we demonstrate the usage of $\xi \in \R^M$ 
for the case of linear optimization.
In the simplest case,
$\xi$  \textit{coincides with the uncertain
  parameters} of the given optimization problem.

\medskip

\begin{example}
\label{ex-linear}
Consider a linear program ${\rm minimize} \{c^tx: Ax \leq b, x \in \R^n\}$ with
a coefficient matrix $A \in \R^{m,n}$, a right-hand side vector $b \in
\R^m$ and a cost vector $c \in \R^n$. If $A,b$, and $c$ are treated as uncertain 
parameters, we write
\begin{eqnarray*}
\label{nominal-3}
\mbox{P($A,b,c$)} \hspace{1cm} & {\rm minimize} \ f(x,(A,b,c))=c^tx \\
& \mbox{s.t. }   F(x,(A,b,c))=Ax-b \leq 0 \\
& x \in \X = \R^n,
\end{eqnarray*}
i.e., $\xi=(A,b,c) \in \R^M$ with $M=n \cdot m + n + m$
\end{example}

However, in (\ref{uncertain}) we allow a more general setting, namely
that the unknown parameters $A,b,c$ may depend on (other) uncertain
parameters $\xi \in \R^M$. For example, there might
be $M=1$ parameter $\xi \in \R$ which determines all values of
$A,b,c$. As an example imagine that the temperature determines the
properties of different materials. In such a case we would have
\begin{eqnarray*}
f(x,\xi):\R^n \times \R \to \R, \mbox{ and } \\
F(x,\xi):\R^n \times \R \to \R^m,
\end{eqnarray*}
where $f(x,\xi)=c(\xi)^t x$ and $F(x,\xi)=A(\xi)x-b(\xi)$. 
\medskip

We now summarize several concepts to handle uncertain optimization problems.

\subsection{Strict robustness and less conservative concepts}
\label{sec:strict}

The 
first formally introduced robustness concept is 
called \textit{strict robustness} here. It
has been first mentioned by
Soyster \cite{Soyster} and then formalized and analyzed by Ben-Tal, El Ghaoui,
and Nemirovski in numerous publications, see
\cite{RObook} for an extensive collection of results.
A solution $x \in \X$ to the uncertain problem (\ref{uncertain})
is called \textit{strictly robust} if it is
feasible for all scenarios in $\cU$, i.e., if
$F(x,\xi) \leq 0$ for all $\xi \in \cU$.
The set of strictly robust solutions with respect to the uncertainty set $\cU$ 
is denoted by $\SR(\cU) = \bigcap_{\xi\in\cU} \F(\xi)$.
The \emph{strictly robust counterpart} of (\ref{uncertain}) is given as
\begin{eqnarray*}
\label{RC-1}
\mbox{RC}(\cU) \hspace{1cm} & {\rm minimize} \ \sup_{\xi \in \cU} f(x,\xi) \label{RC-11}\\
& \mbox{s.t. } x \in \SR(\cU)
\end{eqnarray*}
The objective follows the pessimistic view of minimizing
the worst case over all scenarios.

Often the set of strictly robust solutions is empty, or all of the strictly
robust solutions lead to undesirable solutions (i.e., with considerably worse objective values than a 
nominal solution would achieve).
Recent concepts of
robustness hence try to overcome the ``over-conservative'' nature of the previous approach.
In this paper we deal with recoverable robustness which is described in the next section.
Other less conservative approaches include 
the approach of Bertsimas and Sim \cite{BertSim04}, reliability
\cite{BenTalNemi2000}, light robustness \cite{FischMona09,Scho13},
adjustable robustness \cite{BeGoGuNe2003} (which will be used in Section~\ref{sec-relations}),
and comprehensive robustness \cite{BTBN06}.
For a more detailed recent 
overview on different robustness concepts we refer to \cite{GoeSchoe13-AE}. 

\section{A biobjective approach to recoverable robustness}
\label{sec-recrob}

Our paper extends the recently published concepts of \textit{recoverable robustness}. As before, we consider a parameterized problem
\begin{eqnarray*}
\mbox{P($\xi$)} \hspace{1cm} & {\rm minimize} \ f(x,\xi) \\
& \mbox{s.t. } F(x,\xi) \leq 0\\
& x \in \X
\end{eqnarray*}
The idea of recoverable robustness (see \cite{LLMS09})
is to allow that a solution can be recovered to a feasible one 
for every possible scenario. There, a solution $x \in \X$ is called
\textit{recoverable-robust} if there is a function $y: \cU \to \X$ such that
for any possible scenario $\xi \in \cU$, the solution $y(\xi)\in \F(\xi)$
is not too different from the original solution $x$. This includes on the
one hand the costs for changing the solution $x$ to the solution $y(\xi)$, 
and on the other hand the objective function value of $y(\xi)$ compared to 
the objective function value of $x$.
The solution $y(\xi)$ is called the \emph{recovery solution} for scenario $\xi$.

Examples include recoverable-robust models for
linear programming \cite{sebastiandiss}, shunting \cite{CDDFN07},
timetabling \cite{robustness-overview08}, platforming \cite{rrplatforming},
the empty repositioning problem \cite{Savel09}, railway rolling stock planning \cite{Kroon10}
and the knapsack problem \cite{BKK11}.
An extensive investigation can be found in \cite{sebastiandiss}.
Note that the model has the drawback that even for simple optimization problems an optimal
recoverable-robust solution is usually hard to determine.

\subsection{Model formulation}

Various goals may be followed when computing a recoverable-robust solution: On the one
hand, the new solution should be recoverable to a good solution $y(\xi) \in \F(\xi)$ for every
scenario $\xi \in \cU$. 
On the other hand, also the costs of the recovery are important: A new solution has to be
implemented, and if $x$ differs too much from $y(\xi)$ this might be too costly. 
We assume that the recovery costs can be measured by a metric 
$d:\R^n \times \R^n \to \R$.
An example for metric recovery costs can be found, e.g., for shunting in \cite{Kroon10};
recovery costs defined by norms are also used frequently,
e.g., in timetabling \cite{LLMS09}, in recoverable-robust
linear programming \cite{sebastiandiss}, or in vehicle scheduling problems \cite{Goerigk201566}.
\medskip

Our biobjective model for recoverable robustness can be formulated as follows:
\begin{eqnarray*}
\mbox{(Rec)} \hspace{5mm} & {\rm minimize} \ 
\left( {\bf f}(y), {\bf r}(x,y) \right)=
\left( \sup_{\xi \in \cU} f(y(\xi),\xi), \sup_{\xi \in \cU} d(x,y(\xi)) \right)\\
& \mbox{s.t. } F(y(\xi),\xi) \leq 0\ \ \mbox{ for all } \xi \in \cU\\
& x \in \X, y:\cU \to \X
\end{eqnarray*}
We look for a recoverable robust solution $x$ together with  a recovery solution $y(\xi) \in \F(\xi)$ for every scenario $\xi \in \cU$. Note that if $\cU$ is infinite, (Rec)
is not a finite-dimensional problem. In the objective function we consider
\begin{itemize}
\item the quality $f(y(\xi),\xi)$ of the recovery solutions, which will finally be implemented, in the worst case, and
\item the costs of the recovery $d(x,y(\xi))$, i.e., changing $x$ to $y(\xi)$, again in the worst case.
\end{itemize}

As usual in multi-criteria optimization we are interested in finding Pareto efficient
solutions to this problem. Recall that a solution $(x \in \X, y:\cU \to \X)$ is weakly
Pareto efficient
if there does not exist another solution $x' \in \X, y':\cU \to \X$ such that
\begin{eqnarray*}
\sup_{\xi \in \cU} f(y'(\xi),\xi) & < & \sup_{\xi \in \cU} f(y(\xi),\xi) \mbox{ and }\\
\sup_{\xi \in \cU} d(x',y'(\xi))  & < & \sup_{\xi \in \cU} d(x,y(\xi)).
\end{eqnarray*}
If there does not even exist a solution $x' \in X, y':\cU \to \X$ for which one of the two inequalities holds with equality, then $(x, y)$ is called Pareto efficient. 
\medskip

\begin{ntn}
We call $x$ \emph{recoverable-robust} for (Rec) if there exists $y:\cU \to \X$ 
such that $(x,y)$ is Pareto efficient for (Rec).
$y(\xi)$ is called the recovery solution for scenario $\xi$.
\end{ntn}

We are interested in finding recoverable-robust 
solutions $x$. Note that (Rec) depends on the uncertainty
set $\cU$. This dependence is studied in Section~\ref{sec-infinite}.
\medskip

In (Rec), the worst-case objective $\f$ does not depend on $x$. This is because we assume 
that $x$ is always modified to the appropriate solution $y(\xi)$ when the scenario is revealed.

We remark, that even if $x$ or $y$ is fixed, the resulting problem (Rec) is still challenging.
If $x$ is given, we still have to solve a biobjective problem and choose $y(\xi)$ either with a good 
objective function value in scenario $\xi$ or with small recovery costs close to $x$. If $y$ is given,
(Rec) reduces to a single-objective problem in which a point is searched which minimizes the maximum
distance to all points $y(\xi), \xi \in \cU$.
\medskip

Our first result is negative: Pareto efficient solutions need not exist even for
a finite uncertainty set and bounded recovery costs as the following example demonstrates.

\begin{example}
Consider the uncertain program
\begin{align*}
\mbox{P($\xi$)} \hspace{1cm} \min\  &f(x,\xi)\\
\mbox{s.t. }  1 &\leq \xi x_1 x_2 \\
\xi x_1 &\geq 0\\
x_2 &\geq 0,
\end{align*}
where   $\mathcal U=\{-1,1\}$ is the uncertainty set 
and $\X=\R^2$. The feasible sets of scenario $\xi_1=-1$ and scenario $\xi_2=1$ are given by:
\begin{align*}
\mathcal F(-1) &= \{(x_1,x_2)\in\R^2 : x_1x_2\leq -1, ~x_1\leq 0, ~x_2\geq 0\},\\
\mathcal F(1) &= \{(x_1,x_2)\in\R^2 : x_1x_2\geq 1, ~x_1,x_2\geq 0\}.
\end{align*}
Both feasible sets are depicted in Figure~\ref{figexamplesets}. 
For the objective function $f(x,\xi)=|x_1|$, the problem does not have any 
Pareto efficient solution.

\begin{figure}[htbp]
\begin{center}
\includegraphics[width=0.5\textwidth]{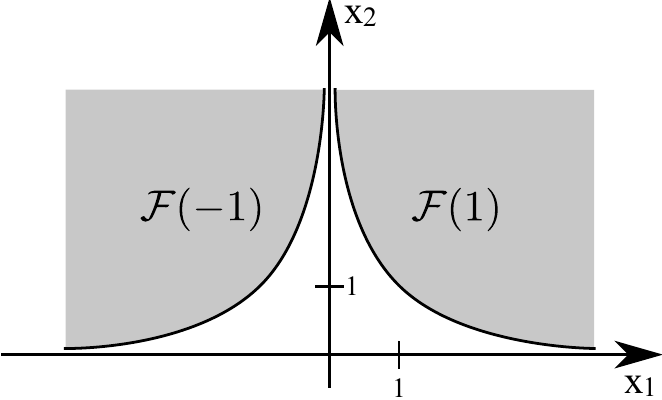}
\caption{An instance of (Rec) not having any Pareto efficient solution
for an uncertainty set $\cU$ with only two scenarios.
\label{figexamplesets}}
\end{center}
\end{figure}
\end{example}

It is known that all weakly Pareto efficient solutions
are optimal solutions of
one of the two $\varepsilon$-constraint scalarizations which
are given by bounding one of the objective functions while minimizing the other one.
\medskip

The first scalarization bounds the recovery costs
and minimizes the objective function value in the first place, i.e.,
\begin{eqnarray*}
\mbox{\Pdelta}
\hspace{1cm} & {\rm minimize} \ \sup_{\xi \in \cU} f(y(\xi),\xi) \\
\mbox{s.t. } 
& d(x,y(\xi)) \leq \delta \ \ \mbox{ for all } \xi \in \cU\\
& F(y(\xi),\xi) \leq 0\ \ \mbox{ for all } \xi \in \cU\\
&x \in \X, y:\cU \to \X.
\end{eqnarray*}
This problem has been introduced as \emph{recoverable robustness} (see \cite{LLMS09}) and
solved in several special cases, e.g., 
in \cite{selection,Goerigk201566,BKK11}.
It is hence denoted as the \emph{classic} scalarization approach.
\medskip

In our paper we look at the other scalarization in which we minimize the recovery
costs while requiring a minimal quality of the recovery solutions:
\begin{eqnarray}
\mbox{\Peps}
\hspace{1cm} & {\rm minimize} \ \sup_{\xi \in \cU} d(x,y(\xi)) \nonumber\\
\mbox{s.t. } 
& f(y(\xi),\xi) \leq \varepsilon\ \ \mbox{ for all } \xi \in \cU \label{unc1} \\
& F(y(\xi),\xi) \leq 0\ \ \mbox{ for all } \xi \in \cU \label{unc2} \\
&x \in \X, y:\cU \to \X \nonumber
\end{eqnarray}
Note that Constraints~\eqref{unc1} and \eqref{unc2} of this second scalarization do \emph{not} 
depend on $x$. To determine feasibility of \Peps, we hence check if for every $\xi \in \cU$ there exists $y(\xi)$ such that 
\[ f(y(\xi),\xi) \leq \varepsilon \mbox{ and } F(y(\xi),\xi) \leq 0, \]
i.e., if the sets
\begin{eqnarray*}
\Geps(\xi) & := & \{y\in\X : F(y,\xi) \leq 0 \mbox{ and } f(y,\xi) \leq \varepsilon\}\\
                      & = &  \F(\xi) \cap \{y\in\X : f(y,\xi) \leq \varepsilon\}
\end{eqnarray*}
are not empty for all $\xi \in \cU$. To extend some given $x$ to a feasible solution, we
choose some $y(\xi) \in \Geps(\xi)$ which is closest to $x$ w.r.t the metric $d$.
This is possible since 
$\Geps(\xi)$ is closed: we define 
\[ d(x,\Geps(\xi))=\min_{y \in \Geps(\xi)} d(x,y),\]
where the minimum exists whenever $\Geps(\xi) \not= \emptyset$. 

With $d(x,\emptyset):=\infty$, we can now rewrite 
(Rec($\varepsilon$)) to an equivalent problem in the (finitely many) $x$-variables only:
\begin{eqnarray}
\label{peps}
\mbox{(Rec'($\varepsilon$))} 
\hspace{1cm} & {\rm minimize} \  \sup_{\xi \in \cU} d(x,\Geps(\xi)) \\
& \mbox{s.t. } x \in \X \nonumber,
\end{eqnarray}
i.e., $x$ is an optimal solution to (Rec'($\varepsilon$)) if and only if $(x,y)$ with
$y(\xi) \in {\rm argmin}_{\Geps(\xi)} d(x,y)$ is optimal for (Rec($\varepsilon$)). 

\subsection{Location-based interpretation of {\Peps}}

In a classic location problem (known
as the \textit{Weber problem} or as the \textit{Fermat-Torricelli problem}, see e.g., 
\cite{DKSW02}) we have given a set of points,
called {\it existing facilities}, and we look for a new
point minimizing a measure of distance to these given points. If the distance to the 
farthest point is considered as the objective function, the problem is called 
{\it center location problem}. 
We have already seen that for given $y:\cU \to \X$, our biobjective problem (Rec) reduces to
the problem of finding a location $x$ which minimizes the maximum distance to the set
$\{y(\xi): \xi \in \cU\}$, i.e., a classic center location problem.

\medskip

We now show that also the $\varepsilon$-constrained version {\Peps} of recoverable robustness
\[ \min_{x \in \X} \max_{\xi\in\cU} d(x,\Geps(\xi)) \]
can be interpreted as 
the following location problem: The existing facilities
are not points but the sets $\Geps(\xi), \xi \in \cU$.  {\Peps} looks for
a new location in the metric space $\X$, namely a point
$x \in \X$ which minimizes the maximum distance to the given sets.
For a finite uncertainty set $\cU$, 
such location problems have been studied in \cite{brimberg2000note,brimberg2002locating}
for the center objective function and in \cite{brimberg2002minisum,nickel2003approach} for
median or ordered median objective functions.
We adapt the notation of location theory and call such a point (which then is an optimal
solution to {\Peps} a \textit{center} with respect to $\{\Geps(\xi): \xi \in \cU\}$ and the distance 
function $d$.
In our further analysis we consider {\Peps} from a location's point of view. To this end,
let us denote the objective function
of {\Peps} by
\[ \reps(x, \cU)=\sup_{\xi \in \cU} d(x,\Geps(\xi)) \]
and let us call $\reps(x, \cU)$ the (recovery) radius of $x$ with respect to $\varepsilon$ and $\cU$.
Let $\reps^*(\cU)$ denote the best possible recovery radius over $\X$ (if it exists). 
For a center location $x^*$ we then have $\reps(x^*, \cU)= \reps^*(\cU)$.

\medskip

The algorithmic advantage drawn from the connection between {\Peps} and (point) 
location problems becomes clear for specific shapes of the sets $\Geps(\xi).$
For instance, let $P(\xi)$ be given as 
\[ \min\{\|\xi-x\|: x \in \R^n\} \]
for some norm $\| \cdot \|$ and $\xi \in \cU \subseteq \R^n$. Then
the sets $\Geps(\xi)$ are scaled and
translated unit balls of the norm $\| \cdot \|$, i.e.,
\[ \Geps(\xi)=\{x \in \R^n: \|x - \xi\| \leq  \varepsilon \}. \]
In this case we obtain that
\[ d(x,\Geps(\xi))=\left\{ \begin{array}{ll}
d(x,\xi)-r & \mbox{ if } d(x,\xi) > \varepsilon \\
0                   & \mbox{ if } d(x,\xi) \leq \varepsilon
\end{array} \right. \]
and it turns out that the center of the location problem with existing (point) facilities
$\{\xi: \xi \in \cU\}$ is an optimal solution to {\Peps} and hence weakly Pareto efficient for
(Rec).

\subsection{Relation of the biobjective model to other robustness concepts}
\label{sec-relations}

We first point out the relation between (Rec) and the concept of strict
robustness of \cite{RObook}. To this end recall from Section~\ref{sec:strict} that
$\SR(\cU)=\{x\in\X : F(x,\xi) \leq 0 \mbox{ for all } \xi \in \cU\}$
is the set of strictly robust solutions and RC($\cU)$ is the strictly robust counterpart of 
$(P(\xi),\xi \in \cU)$.

\begin{lemma}
Let an uncertain problem $\rm (P(\xi), \xi \in \cU)$ be given. Then we have:
\begin{enumerate}
\item If $\bar{x}$ is an optimal solution to RC($\cU$) then $(\bar{x},\bar{y})$ with $\bar{y}(\xi):=\bar{x} \mbox{ for all } \xi \in \cU$
is a lexicographically minimal solution to (Rec) w.r.t $({\bf r}(x,y),\f(y))$.
\item Let $(\bar{x},\bar{y})$ be a lexicographically minimal solution to (Rec) w.r.t $({\bf r}(x,y),{\bf f}(y))$.
Then $\SR(\cU)\not=\emptyset$ if and only if ${\bf r}(\bar{x},\bar{y})=0$ and in this case $(\bar{x},\bar{y})$
is optimal to RC($\cU$).
\end{enumerate}
\end{lemma}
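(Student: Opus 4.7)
The whole lemma hinges on a single observation about the metric nature of the recovery cost. Since $d$ is a metric, we have $d(x,z)=0$ iff $z=x$, and hence
\[
{\bf r}(x,y) = \sup_{\xi \in \cU} d(x,y(\xi)) = 0 \quad \Longleftrightarrow \quad y(\xi)=x \ \text{for all } \xi\in\cU.
\]
Combined with the constraint $y(\xi)\in\F(\xi)$ built into (Rec), this equivalence further says that ${\bf r}(x,y)=0$ is possible only when $x\in\F(\xi)$ for every $\xi\in\cU$, that is, $x\in\SR(\cU)$, and in this case
\[
{\bf f}(y) = \sup_{\xi\in\cU} f(y(\xi),\xi) = \sup_{\xi\in\cU} f(x,\xi),
\]
which is exactly the RC($\cU$)-objective evaluated at $x$. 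I would open the proof with this observation and then use it twice.

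For Part 1, suppose $\bar x$ is optimal for RC($\cU$); in particular $\bar x\in\SR(\cU)$. Setting $\bar y(\xi):=\bar x$ yields a feasible solution of (Rec) with ${\bf r}(\bar x,\bar y)=0$, which is the smallest possible value of ${\bf r}$ since $d\ge 0$. So the first lexicographic stage is attained. For the second stage, any competitor $(x',y')$ with ${\bf r}(x',y')=0$ must, by the opening observation, satisfy $y'(\xi)=x'\in\SR(\cU)$, and its ${\bf f}$-value is $\sup_{\xi} f(x',\xi)$. Optimality of $\bar x$ for RC($\cU$) then gives ${\bf f}(\bar y)\le {\bf f}(y')$, so $(\bar x,\bar y)$ is indeed lexicographically minimal.

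For Part 2, let $(\bar x,\bar y)$ be a lex-minimal solution. If $\SR(\cU)\ne\emptyset$, pick any $x'\in\SR(\cU)$ and set $y'(\xi):=x'$; this is (Rec)-feasible and has ${\bf r}(x',y')=0$. Lexicographic minimality of $(\bar x,\bar y)$ on the first component forces ${\bf r}(\bar x,\bar y)\le 0$, hence $={\bf r}(\bar x,\bar y)=0$. Conversely, ${\bf r}(\bar x,\bar y)=0$ immediately implies via the opening observation that $\bar x\in\SR(\cU)$, so $\SR(\cU)\ne\emptyset$. In this latter situation, the second-stage minimality of ${\bf f}$ among all candidates with ${\bf r}=0$ translates, again through the observation, into the statement that $\bar x$ minimizes $\sup_{\xi\in\cU} f(x,\xi)$ over $x\in\SR(\cU)$, which is precisely optimality for RC($\cU$).

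The only real subtlety, and thus the main thing to get right, is this equivalence between \emph{${\bf r}=0$ solutions of (Rec)} and \emph{strictly robust solutions of $(P(\xi),\xi\in\cU)$}. Everything else is bookkeeping on the two lexicographic stages. A minor point worth flagging in the write-up is that Part 1 is only vacuously interesting unless RC($\cU$) admits an optimum (in particular $\SR(\cU)\ne\emptyset$), but this is already implicit in the hypothesis.
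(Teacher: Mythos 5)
Your proof is correct and follows essentially the same route as the paper's: both hinge on the observation that, $d$ being a metric, ${\bf r}(x,y)=0$ forces $y(\xi)=x$ for all $\xi$ and hence $x\in\SR(\cU)$ with ${\bf f}(y)=\sup_{\xi\in\cU}f(x,\xi)$, after which the two lexicographic stages map directly onto feasibility and optimality for RC($\cU$). The only stylistic difference is that you extract this equivalence up front while the paper rederives it inline (and phrases Part~1 as a contradiction), which does not change the substance.
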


\begin{proof}
\begin{enumerate}
\item Let $\bar{x}$ be an optimal solution to RC($\cU$). Define $\bar{y}(\xi):=\bar{x}$ for all $\xi \in \cU$. Then
${\bf r}(\bar{x},\bar{y})=0$. Now assume $(\bar{x},\bar{y})$ is not lexicographically minimal. 
Then there exists $(x',y')$ with ${\bf r}(x',y')=0$ and 
${\bf f}(y') < {\bf f}(\bar{y})$. The first condition yields that $d(x',y'(\xi))=0$ for
all $\xi \in \cU$, hence $x'=y'(\xi)$ for all $\xi \in \cU$, and $x' \in \SR(\cU)$. Using $x'=y'(\xi)$, the 
second condition implies $\sup_{\xi \in \cU} f(x',\xi) < \sup_{\xi \in \cU} f(\bar{x},\xi)$, a contradiction to the
optimality of $\bar{x}$ for RC($\cU$).
\item Now let $(\bar{x},\bar{y})$ be lexicographically minimal to (Rec). 
\begin{itemize}
\item Let ${\bf r}(\bar{x},\bar{y})=0$. Then 
$0={\bf r}(\bar{x},\bar{y})=\sup_{\xi \in \cU}d(\bar{x},\bar{y}(\xi))$, 
i.e., $\bar{x}=\bar{y}(\xi)$ for all $\xi \in \cU$. Hence 
$\bar{x} \in \F(\xi)$ for all $\xi \in \cU$, i.e., $\bar{x} \in \SR(\cU)$.
\item On the other hand, if $\SR(\cU) \not= \emptyset$ there exists $x \in \F(\xi)$ for all $\xi \in \cU$.
We define $y(\xi):=x$ for all $\xi \in \cU$ and obtain ${\bf r}(x,y)=0$. Since $(\bar{x},\bar{y})$ is 
lexicographically minimal this implies ${\bf r}(\bar{x},\bar{y})=0$. 
\end{itemize}
Finally, if ${\bf r}(\bar{x},\bar{y})=0$ we already know that $\bar{x}=\bar{y}(\xi)$ for all $\xi \in \cU$ and $\bar{x} 
\in \SR(\cU)$, i.e., feasible for RC($\cU$).
The lexicographic optimality then guarantees that $\bar{x}$ is an optimal solution to RC($\cU$).
\end{enumerate}
\end{proof}

Sorting the criteria in the objective function in the other order, i.e., minimizing first
${\bf f}(y)$ and then ${\bf r}(x,y)$ is not directly related to any known robustness concept.
This lexicographically minimal solution $(x,y)$ realizes an optimal solution $y(\xi)$ in every
scenario, and among these optimal solutions minimizes the recovery costs.

\begin{lemma}
Let $(x,y)$ be a solution to (Rec) which is lexicographically minimal w.r.t 
$({\bf f}(y),{\bf r}(x,y))$. Then $f(y(\xi),\xi)=f^*(\xi)$ for all $\xi \in \cU$.
\end{lemma}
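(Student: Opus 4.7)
The natural strategy is by contradiction. Suppose $(x,y)$ is lexicographically minimal w.r.t. $(\f(y),\br(x,y))$ but there is some $\xi_0 \in \cU$ with $f(y(\xi_0),\xi_0) > f^*(\xi_0)$. The aim is to build a competitor $(x,y')$ that improves the lex-min order and contradicts the assumed optimality.

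I would construct $y'$ by modifying $y$ only at $\xi_0$: pick $\tilde y \in \F(\xi_0)$ with $f(\tilde y,\xi_0) = f^*(\xi_0)$, which exists by definition of $f^*(\xi_0)$, and set $y'(\xi_0) := \tilde y$ and $y'(\xi) := y(\xi)$ for $\xi \neq \xi_0$. Then $y'$ is still feasible for (Rec). Moreover $f(y'(\xi_0),\xi_0) = f^*(\xi_0) < f(y(\xi_0),\xi_0) \le \f(y)$, while $f(y'(\xi),\xi) = f(y(\xi),\xi) \le \f(y)$ for the remaining scenarios, so $\f(y') \le \f(y)$. If this inequality is strict --- which happens exactly when $\xi_0$ is the unique sup-attainer of $\f(y)$ --- we directly contradict lex-minimality of the first coordinate and we are done.

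The non-trivial case is $\f(y') = \f(y)$, which is where I expect the main work to lie. Here $(x,y')$ qualifies as a competitor at the second stage of the lex-min, so lex-minimality forces $\br(x,y) \le \br(x,y')$. To get a contradiction I would refine the choice of $\tilde y$: among all $f$-optimizers in $\F(\xi_0)$ pick one minimizing $d(x,\cdot)$, and try to argue $\br(x,y') < \br(x,y)$. The obstacle is exactly that when the supremum defining $\br(x,y)$ is attained at some scenario other than $\xi_0$ \emph{and} every $f$-optimizer for $\xi_0$ happens to be no closer to $x$ than $y(\xi_0)$, the local modification affects neither coordinate of the bi-objective and no contradiction surfaces from a one-scenario swap. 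Resolving this --- perhaps via a simultaneous replacement at \emph{all} suboptimal scenarios, closedness of $\F(\xi_0)$ combined with continuity of $d$ and $f$, or a structural argument about the set of $f$-optimizers --- is where I expect the proof to become technical.
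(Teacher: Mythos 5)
The paper states this lemma \emph{without proof}, so there is no argument of the authors' to compare yours against; your attempt is in effect a first attempt at a proof, and the obstacle you isolate in your last paragraph is not a technicality to be engineered around --- it is fatal. Under the standard reading of lexicographic minimality (first minimize $\f(y)=\sup_{\xi\in\cU}f(y(\xi),\xi)$, then minimize $\br(x,y)$ among the minimizers of $\f$), which is exactly the reading the paper uses in the proof of the preceding lemma, the statement is false. Concretely: take $\X=\R$, $\cU=\{1,2\}$, $d(x,y)=|x-y|$, $f(y,\xi)=y$ in both scenarios, $\F(1)=[0,10]$ and $\F(2)=[10,20]$, so that $f^*(1)=0$ and $f^*(2)=10$. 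Since $y(2)\ge 10$ for every feasible $y$, the minimum of $\f$ is $10$, attained by any $y$ with $y(2)=10$ and $y(1)\in[0,10]$; among these, $\br$ is minimized (with value $0$) only by $x=y(1)=y(2)=10$. This solution is lexicographically minimal, yet $f(y(1),1)=10>0=f^*(1)$. This is precisely the configuration you describe: the unique $f$-optimizer for scenario $1$ (the point $0$) is farther from $x$ than $y(1)$ is, so your swap strictly worsens $\br$ while leaving $\f$ unchanged, and no refinement --- simultaneous replacement at all suboptimal scenarios, closedness of $\F(\xi_0)$, continuity of $d$ and $f$ --- can manufacture the contradiction, because there is none to be had.

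The conclusion the surrounding prose announces (``realizes an optimal solution $y(\xi)$ in every scenario, and among these optimal solutions minimizes the recovery costs'') is what one would obtain if the first stage of the lexicographic problem were imposed scenario-wise, i.e.\ if one required $f(y(\xi),\xi)=f^*(\xi)$ for each $\xi$ separately instead of minimizing the aggregated supremum $\f(y)$. With the sup-aggregation, the first stage only pins down $\f(y)=\sup_{\xi\in\cU}f^*(\xi)$ (when this is attained), which leaves $f(y(\xi),\xi)$ free to exceed $f^*(\xi)$ in every scenario whose individual optimum lies strictly below that supremum; the second stage then actively exploits this slack to shrink the recovery radius, as in the example above. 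The only part of the claim that survives is the easy case from your first paragraph: one can show $f(y(\xi),\xi)\le\sup_{\xi'\in\cU}f^*(\xi')$ for all $\xi$, and hence the lemma does hold in the degenerate situation where $f^*(\xi)$ is constant over $\cU$. For the general statement you should stop searching for a proof and record the counterexample instead.
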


We now turn our attention to {\Peps} and show that this scalarization can be interpreted as
adjustable robustness as in \cite{BeGoGuNe2003}.
Motivated by stochastic programming, 
the variables in this concept are decomposed into two sets: The values
for the \textit{here-and-now variables} have to be found in the robust optimization algorithm while the decision about the \textit{wait-and-see variables} can wait until the actual scenario $\xi \in \cU$ becomes known.
For an uncertain problem $\rm (P(\xi), \xi \in \cU)$, recall that {\Peps} is given as
\[ \min_{x\in\X} \sup_{\xi\in\cU} d(x,\Geps(\xi)).\]
We can rewrite this problem in the following way:
\[ \min_{z,x} \left\{ z : \forall \xi\in\cU \ \exists y \in \Geps(\xi) : d(x,y) \le z \right\} \]
which has the same structure as an adjustable robust problem. As an example, for a 
problem with linear objective function $f(x,\xi)=c(\xi)^t x$,
linear constraints $F(x,\xi) = A(\xi)x - b(\xi) \le 0$, and $\Vert\cdot\Vert_1$ as recovery norm, we may write
\begin{equation}\label{adjust-program}
\min_{z',x} \left\{ \sum_{i=1}^n z'_i : \forall \xi\in\cU\ \exists y : c(\xi)^ty \leq \varepsilon,\ 
A(\xi)y \le b(\xi),\ -z' \le x - y \le z' \right\}.
\end{equation}
Note that this is a problem \emph{without} fixed recourse, such that most of the results in 
\cite{BeGoGuNe2003} are \emph{not} applicable.
However, we are still able to apply their results on using heuristic, affinely adjustable counterparts, and
Theorem 2.1 from \cite{BeGoGuNe2003}:

\begin{theorem}
Let $\rm (P(\xi), \xi \in \cU)$ be an uncertain linear optimization problem, 
and let the uncertainty be constraint-wise. Furthermore, let there be a compact set $C$ such that 
$\F(\xi) \subseteq C$ for all $\xi\in\cU$. Then, {\Peps} 
is equivalent to the following problem
\begin{equation}\label{adjust-program2}
\min_{z,x} \left\{ \sum_{i=1}^n z'_i :\exists y\ \forall \xi\in\cU: c(\xi)^ty \leq \varepsilon,\ 
A(\xi)y \le b(\xi),\ -z' \le x - y \le z' \right\}.
\end{equation}
\end{theorem}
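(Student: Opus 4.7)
The plan is to recognize (\ref{adjust-program}) as an adjustable robust linear program and to invoke Theorem 2.1 of \cite{BeGoGuNe2003} directly in order to remove the dependence of the wait-and-see variable $y$ on $\xi$. A preliminary step is to note that (\ref{adjust-program}) and {\Peps} share the same optimal value for $(x,z')$: introducing $z'_i \ge |x_i - y_i|$ linearises the $\ell_1$ recovery metric, and allowing $y$ to be scenario-dependent reproduces the inner minimization $y(\xi)\in\mathrm{argmin}_{y\in\Geps(\xi)} d(x,y)$ that defines $d(x,\Geps(\xi))$ in (\ref{peps}). Hence it suffices to show that the quantifier order in (\ref{adjust-program}) may be swapped so as to yield (\ref{adjust-program2}).

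Next I would verify the hypotheses required by Theorem 2.1 of \cite{BeGoGuNe2003} in our setting. All constraints are linear in the here-and-now variables $(x,z')$ and in the wait-and-see variable $y$, with coefficients possibly depending on $\xi$. The uncertainty is assumed constraint-wise, i.e.\ $\cU$ decomposes as a Cartesian product whose factors affect disjoint rows of the system. Finally, the compact set $C\supseteq \F(\xi)$ for every $\xi\in\cU$ provides a uniform bound on any admissible recourse $y$, which is the ingredient that enables the result to be invoked in the absence of fixed recourse.

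The direction in which the optimal value of (\ref{adjust-program2}) is at most that of (\ref{adjust-program}) is the nontrivial half and is the heart of Theorem 2.1 of \cite{BeGoGuNe2003}: under constraint-wise uncertainty, the worst case of each row is attained independently of the others, and compactness of $C$ permits the selection of a single $y\in C$ that simultaneously realises the per-row worst-case residuals, matching any scenario-dependent recourse. The reverse inequality is immediate, since any $(x,z',y)$ feasible for (\ref{adjust-program2}) is admissible in (\ref{adjust-program}) via the constant rule $y(\xi)\equiv y$.

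The main obstacle is precisely that our formulation does \emph{not} have fixed recourse: both the objective coefficient $c(\xi)$ and the matrix $A(\xi)$ multiplying $y$ depend on $\xi$. BGGN's Theorem 2.1 accommodates this because the constraint-wise assumption decouples the uncertainty across rows, so the proof reduces to a row-by-row argument where compactness of $C$ together with continuity of the problem data in $\xi$ guarantees that the choice of $y$ can be made uniformly. I would therefore invoke their theorem as a black box, the verification consisting only in checking the three bullet points above, rather than reproving the underlying compactness-plus-selection argument.
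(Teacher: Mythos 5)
Your proposal is correct and follows essentially the same route as the paper, which offers no explicit proof and simply derives the statement by invoking Theorem 2.1 of \cite{BeGoGuNe2003} after recasting {\Peps} as the adjustable robust program \eqref{adjust-program}. Your additional care in checking the hypotheses (linearity, constraint-wise uncertainty, and the compact set $C$ standing in for the usual compactness assumption in the absence of fixed recourse) only makes explicit what the paper leaves implicit.
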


Note that problem \eqref{adjust-program2} is a strictly robust problem, which is considerably easier to solve than problem \eqref{adjust-program}. Furthermore, 
\cite{BeGoGuNe2003} show that
there is a semidefinite program for ellipsoidal uncertainty sets 
which is equivalent to problem \eqref{adjust-program}.

Problem {\Peps} can 
also be interpreted as a strictly robust problem in $x$ (see \eqref{peps}).
However, the function $\xi \mapsto d(x,\Geps)$ has in general not much properties such that most of the
known results cannot be directly applied. Nevertheless, our geometric interpretation gives rise to the results of
the next section, in particular within the biobjective setting.

\section{Solving \Peps}
\label{sec:solve}

In this section we investigate the new scalarization {\Peps}. 
After a more general analysis of this optimization
problem in Section~\ref{sec41}, we turn our attention to the case of a finite uncertainty set
in Section~\ref{sec-finite} where we consider problems with convex and with linear
constraints.

\subsection{Analysis of \Peps}\label{sec41}
Let us now describe some general properties of problem {\Peps}.
Since $d$ is a metric we know that
\begin{equation}
\label{lower-bound}
0 \leq \reps(x,\cU)  \leq +\infty  \ \mbox{ for all } x \in \R^n,
\end{equation}
hence the optimal value of {\Peps} is bounded by zero from below, although it is 
$+\infty$ if all points $x$ have infinite radius
$\reps(x,\cU).$ This event may happen even when all
sets $\Geps(\xi)$ are non-empty. Indeed, consider, for 
instance, $\X=\R,$ $\Geps(\xi) = \{\xi\}$ for all $\xi \in \cU= \R.$
One has, however, that finiteness of $\reps(x,\cU)$ at one point $x_0$ and one $\varepsilon$ implies 
finiteness 
of $r_{\varepsilon'}(x,\cU)$ for all $x \in \X$ and for all $\varepsilon' \geq \varepsilon$.
In that case we obtain Lip\-schitz-continuity of the radius, as shown in the following result.

\begin{lemma}
Let an uncertain optimization problem $\rm (P(\xi),\xi\in\cU)$ be given.
Suppose there exists $x_0 \in \R^n$
such that $\reps(x_0,\cU) < +\infty.$ Then, $r_{\varepsilon}(x,\cU) < +\infty$ for all
$x \in \R^n$ and for all $\varepsilon' \geq \varepsilon$.
In such a case, the function $\R^n \ni x \, \longmapsto r_{\varepsilon'}(x,\cU)$ 
is Lipschitz-continuous with Lipschitz constant $L=1$ for every $\varepsilon' \geq \varepsilon$.
\end{lemma}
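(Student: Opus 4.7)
The plan is to combine two easy observations: (i) the set-distance $d(x, \Geps(\xi))$ is monotone in $\varepsilon$, and (ii) the function $x \mapsto d(x, A)$ is 1-Lipschitz for any fixed nonempty set $A$ (a standard consequence of the triangle inequality for a metric), and these properties pass through $\sup_{\xi \in \cU}$ without damage.

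First I would establish finiteness. For $\varepsilon' \geq \varepsilon$, the constraint $f(y,\xi) \leq \varepsilon'$ is weaker than $f(y,\xi) \leq \varepsilon$, so $\Geps(\xi) \subseteq \G_{\varepsilon'}(\xi)$ for every $\xi \in \cU$. Enlarging a set can only decrease the distance to it (with the convention $d(x,\emptyset) = \infty$ covering the case $\Geps(\xi) = \emptyset$), hence
\[ d(x, \G_{\varepsilon'}(\xi)) \leq d(x, \Geps(\xi)) \quad \text{for all } x \in \X, \xi \in \cU, \]
and taking $\sup_{\xi \in \cU}$ gives $r_{\varepsilon'}(x, \cU) \leq \reps(x, \cU)$. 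It therefore suffices to bound $\reps(x, \cU)$. For any $y \in \Geps(\xi)$ the triangle inequality yields $d(x, y) \leq d(x, x_0) + d(x_0, y)$; taking the infimum over $y \in \Geps(\xi)$ gives $d(x, \Geps(\xi)) \leq d(x, x_0) + d(x_0, \Geps(\xi))$, and then the $\sup$ over $\xi$ gives
\[ \reps(x, \cU) \leq d(x, x_0) + \reps(x_0, \cU) < +\infty. \]
Combining the two displays gives $r_{\varepsilon'}(x, \cU) < +\infty$ for every $x \in \R^n$ and every $\varepsilon' \geq \varepsilon$.

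For the Lipschitz property, fix $\varepsilon' \geq \varepsilon$ and any $x_1, x_2 \in \R^n$. For each $\xi \in \cU$ with $\G_{\varepsilon'}(\xi) \neq \emptyset$ and each $y \in \G_{\varepsilon'}(\xi)$,
\[ d(x_1, y) \leq d(x_1, x_2) + d(x_2, y), \]
so taking the infimum over $y \in \G_{\varepsilon'}(\xi)$ gives $d(x_1, \G_{\varepsilon'}(\xi)) \leq d(x_1, x_2) + d(x_2, \G_{\varepsilon'}(\xi))$. (The case $\G_{\varepsilon'}(\xi) = \emptyset$ is ruled out by finiteness of $r_{\varepsilon'}(x_2, \cU)$ established above.) Taking $\sup_{\xi \in \cU}$ on the right, and then on the left, yields
\[ r_{\varepsilon'}(x_1, \cU) \leq d(x_1, x_2) + r_{\varepsilon'}(x_2, \cU). \]
Interchanging the roles of $x_1$ and $x_2$ gives $|r_{\varepsilon'}(x_1, \cU) - r_{\varepsilon'}(x_2, \cU)| \leq d(x_1, x_2)$, the claimed Lipschitz estimate with constant $L = 1$.

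The only subtle point is handling the possibility that $\Geps(\xi)$ (or $\G_{\varepsilon'}(\xi)$) is empty for some $\xi \in \cU$; this is cleanly dispatched by the convention $d(x, \emptyset) := \infty$ used in the paper, together with the observation that the hypothesis $\reps(x_0, \cU) < \infty$ forces $\Geps(\xi) \neq \emptyset$ for every $\xi \in \cU$, and the monotonicity then gives the same for $\G_{\varepsilon'}(\xi)$. Beyond this, the proof is entirely a matter of pushing the triangle inequality through an infimum and a supremum, so no real obstacle is expected.
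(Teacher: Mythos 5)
Your proof is correct and follows essentially the same route as the paper's: both arguments rest on pushing the triangle inequality $d(x,\Geps(\xi)) \leq d(x,x')+d(x',\Geps(\xi))$ through the supremum over $\xi$, first to get finiteness from the anchor point $x_0$ and then to get the Lipschitz bound. The only differences are cosmetic --- you work with infima directly (avoiding the paper's appeal to closedness and its $\delta$-selection of a near-worst-case scenario) and you make the monotonicity in $\varepsilon$ and the nonemptiness of the sets $\Geps(\xi)$ explicit, which the paper leaves implicit.
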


\begin{proof}
Take $x \in \R^n$ and $\xi \in \cU.$ Let $y \in \Geps(\xi)$ such that $d(x_0,\Geps(\xi)) = d(x_0,y).$ We have that
\[d(x,\Geps(\xi))  \leq  d(x,y) \leq d(x,x_0) + d(x_0,y) = d(x,x_0) + d(x_0,\Geps(\xi)) \]
Hence,
\[
\max_{\xi \in \cU} d(x,\Geps(\xi)) \leq d(x,x_0) + \max_{\xi \in \cU} d(x_0,\Geps(\xi)) < +\infty,
\]
and therefore, $\reps(x,\cU)$ is finite everywhere. Since $r_{\varepsilon'}(x,\cU) \leq \reps(c,\cU)$ for
all $\varepsilon' \geq \varepsilon$ we also have finiteness if we increase $\varepsilon$.
\medskip

We now show that $\reps(\cdot,\cU)$ is also Lipschitz-continuous. Let $\delta > 0$, and let  $x,x' \in \R^n.$
Take $\xi^*$ such that
\[
\delta + d(x,\Geps(\xi^*)) \geq \reps(x,\cU).
\]
Since $\Geps(\xi^*)$ is closed,
take also $y' \in \Geps(\xi^*)$ such that $d(x',\Geps(\xi^*)) = d(x',y').$

Then,
\begin{eqnarray*}
\reps(x,\cU) - \reps(x',\cU) & \leq & \delta +  d(x,\Geps(\xi^*))-d(x',\Geps(\xi^*)) \\
 & \leq & \delta +  d(x,y') - d(x',y') \\
 & \leq & \delta + d(x,x').
\end{eqnarray*}
Since this inequality holds for any $\delta > 0,$ we obtain
$\reps(x,\cU) - \reps(x',\cU) \leq d(x,x')$, 
hence the function $r(\cdot,\cU)$ is Lipschitz-continuous with Lipschitz constant 1.
\end{proof}

In what follows we assume finiteness of the optimal value of {\Peps}, and thus Lipschitz-continuity of $\reps(\cdot,\cU).$
Hence, {\Peps} may be solved by using standard Lipschitz optimization methods \cite{Yaro}.
\medskip

For a given $x \in \R^n$ let us call $\xi \in \cU$ a
\textit{worst-case scenario} with
respect to $x$ (and $\cU$) if
\[ d(x, \Geps(\xi))=\reps(x, \cU) \]
and let $WC_\varepsilon(x,\cU)$ be the set of all
worst-case scenarios, i.e., scenarios  $\xi \in \cU$ yielding the maximal recovery distance
for the solution $x$. Under certain assumptions, 
any optimal solution $x^*$ to {\Peps} has a set
$WC_\varepsilon(x^*,\cU)$ with at least two elements, as shown in the following result.
\medskip

\begin{lemma}
\label{le:geq2}
Let an uncertain optimization problem $\rm (P(\xi), \xi\in\cU)$ be given.
Suppose that $\cU$ is finite (with at least two elements) and $\X=\R^n$.
Fix some $\varepsilon$ and assume that
{\Peps} attains its optimum at some $x^* \in \R^n.$
Then, $|WC_\varepsilon(x^*, \cU)| \geq 2$.
\end{lemma}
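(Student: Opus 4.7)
My plan is to argue by contradiction: assume $WC_\varepsilon(x^*,\cU)=\{\xi^*\}$ is a singleton, and construct a nearby point $x_t\in\R^n$ with strictly smaller recovery radius $\reps(x_t,\cU)<\reps(x^*,\cU)$, contradicting the optimality of $x^*$.

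First I would dispose of the trivial case $r^*:=\reps(x^*,\cU)=0$: here $d(x^*,\Geps(\xi))=0$ for every $\xi\in\cU$, so every scenario is worst-case and $|WC_\varepsilon(x^*,\cU)|=|\cU|\geq 2$ already, done. So I may assume $r^*>0$. Since $\cU$ is finite, the singleton assumption produces a strictly positive gap
\[
\alpha := r^* - \max_{\xi\in\cU\setminus\{\xi^*\}} d(x^*,\Geps(\xi)) > 0.
\]
Next, pick a closest point $y^*\in\Geps(\xi^*)$ with $d(x^*,y^*)=r^*$; this exists since $\Geps(\xi^*)$ is nonempty (by feasibility of {\Peps} at $x^*$) and closed, so the minimum defining $d(x^*,\Geps(\xi^*))$ is attained, as noted in Section~\ref{sec-recrob}.

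The key step is to shift $x^*$ a small amount toward $y^*$. Taking the metric $d$ to be norm-induced, as is implicit in the location-theoretic framing, I set $x_t := x^* + t(y^*-x^*)$ for $t\in(0,1)$, so that $d(x_t,x^*)=tr^*$ and $d(x_t,y^*)=(1-t)r^*$. Then
\[
d(x_t,\Geps(\xi^*)) \leq d(x_t,y^*) = (1-t)r^* < r^*,
\]
and for any $\xi\in\cU\setminus\{\xi^*\}$, the triangle inequality (applied set-wise, and also underlying the $1$-Lipschitz continuity of $\reps$ from the previous lemma) yields
\[
d(x_t,\Geps(\xi)) \leq d(x^*,\Geps(\xi)) + d(x_t,x^*) \leq (r^*-\alpha) + tr^*.
\]
Choosing $t\in(0,1)$ with $tr^*<\alpha$ makes both bounds strictly less than $r^*$, hence $\reps(x_t,\cU)<r^*=\reps(x^*,\cU)$, the desired contradiction.

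The main obstacle is the interpolation step: one needs $d(x_t,y^*)$ to decrease linearly in $t$, which is automatic when $d$ is a norm metric (translation-invariant and positively homogeneous) but would require a separate approachability argument for a purely abstract metric on $\R^n$. Once this is granted, the finiteness of $\cU$ is exactly what is needed to guarantee the strictly positive gap $\alpha$, and the non-active scenarios are handled by a direct triangle-inequality estimate identical in spirit to the one used in the preceding Lipschitz lemma.
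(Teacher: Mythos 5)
Your proposal is correct and follows essentially the same route as the paper's proof: dispose of the $r^*=0$ case, use finiteness of $\cU$ to get a positive gap for the non-active scenarios, move $x^*$ along the segment toward a closest point $y^*\in\Geps(\xi^*)$, and bound the active scenario by $(1-t)r^*$ and the others via the triangle inequality. Your remark that the linear decrease of $d(x_t,y^*)$ relies on $d$ being norm-induced is a fair observation — the paper's proof makes the same implicit assumption by switching between $d$ and $\|\cdot\|$ in its estimates.
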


\begin{proof}
Finiteness of $\cU$ implies that the maximum of $d(x^*,\Geps(\xi))$ must be attained at some $\xi.$
Hence, $|WC_\varepsilon(x^*, \cU)| \geq 1$.

In the case that $\reps(x^*,\cU)=0$, we have that $WC_\varepsilon(x^*,\cU)=\cU$. Thus, let $\reps(x^*,\cU)>0$.

In the case that $WC_\varepsilon(x^*, \cU)=\{ \xi^*\}$ for only one scenario $\xi^* \in \cU,$
we 
can construct a contradiction by finding a different $x$ with a better radius:
Take $y^* \in \Geps(\xi^*)$ such that $d(x^*,y^*) = d(x^*,\Geps(\xi^*)),$ and, for $\lambda \in [0,1],$ define $x_\lambda$ as
\[
x_\lambda = (1-\lambda)x^* + \lambda y^*.
\]
Since, by assumption, $WC_\varepsilon(x^*, \cU)=\{ \xi^*\}$  and $\cU$ is finite, there exists $\delta > 0$ such that
\[
d(x^*,\Geps(\xi)) < d(x^*,\Geps(\xi^*)) - \delta \qquad \forall \xi \in \cU, \, \xi \neq \xi^*.
\]
Let us show that, for $\lambda$ close to zero, $x_\lambda$ has a strictly better objective value than $x^*,$ which would be a contradiction.
First we have
\begin{eqnarray*}
d(x_\lambda,\Geps(\xi^*)) & \leq & d(x_\lambda,y^*) \\
 & = & (1-\lambda) \|x^*-y^*\| = (1-\lambda) d(x^*,\Geps(\xi^*))\\
& < & d(x^*,\Geps(\xi^*)) \text{ for } \lambda > 0.
\end{eqnarray*}
For the remaining scenarios $\xi \neq \xi^*,$
\begin{eqnarray*}
d(x_\lambda,\Geps(\xi)) & \leq &  \inf_{y \in \Geps(\xi)} \big\{ \|x_\lambda -x^*\| + \|x^* - y\| \big\}\\
& = & \inf_{y \in \Geps(\xi)} \big\{ \lambda\|x^* - y^* \| + \|x^* - y\| \big\}\\
& = & \lambda \|x^*-y^*\| + d(x^*,\Geps(\xi)) \\
& < & \lambda \|x^*-y^*\| + d(x^*,\Geps(\xi^*)) - \delta\\
& < & d(x^*,\Geps(\xi^*)) \text{ for } \lambda<\frac{\delta}{\|x^* - y^*\|}.
\end{eqnarray*}
Hence, for $0<\lambda<\frac{\delta}{\|x^* - y^*\|}$,
we would have that
\[
\max_{\xi \in \cU} d(x_\lambda,\Geps(\xi)) < d(x^*,\Geps(\xi^*)) = \max_{\xi \in \cU} d(x^*,\Geps(\xi)),
\]
contradicting the optimality of $x^*.$
\end{proof}

\medskip

If the finiteness assumption of Lemma \ref{le:geq2} is dropped, not much can be said about the cardinality 
of $WC_\varepsilon(x,\cU),$ since this set can be empty or a singleton:
\medskip

\begin{example}
Let $\cU = \{-1,1\}\times [1,\infty),$ and let $F(x,(\xi_1,\xi_2)) = (x-\xi_1)(\xi_2x-\xi_1\xi_2 + \xi_1)$.
Let $f(x)=const$ and choose $\varepsilon > const$.
It is easily seen that 
\begin{equation}
\label{eq:compacts}
\begin{array}{rcl}
\G_{\varepsilon}(-1,\xi_2) = \F(-1,\xi_2) &= &[-1,-1+\frac{1}{\xi_2}] \\
\G_{\varepsilon}(1,\xi_2) = \F(1,\xi_2) &=& [1-\frac{1}{\xi_2},1]
\end{array}
\end{equation}
For $x = 0,$ $\reps(x,\cU) = 1,$ but there is no $\xi\in\cU$ with $d(x,\Geps(\xi)) = 1$.
In other words, $WC_\varepsilon(0,\cU) = \emptyset.$

\end{example}

\subsection{Solving {\Peps} for a finite uncertainty set $\cU$}

\label{sec-finite}
In this section we assume that $\cU$ is finite, $\cU= \{\xi^1,\ldots,\xi^N\}.$ This simplifies the 
analysis, since
we can explicitly search for a solution $y^k=y(\xi^k)$ for every scenario $\xi^k \in \cU$.
Using the $y^k$ as variables we may formulate {\Peps} as
\begin{equation}\label{form:recfeas-finite}
\begin{array}{llll}
 \min &  r \\
\mbox{s.t. } & F(y^k,\xi^k) & \leq  & 0 \mbox{ for all } k=1,\ldots,N\\
& f(y^k,\xi^k) & \leq  & \varepsilon \mbox{ for all } k=1,\ldots,N\\
&d(x,y^k) & \leq & r \mbox{ for all } k=1,\ldots,N\\
& x \in \X, r \in \R \\
& y^k \in \X && \phantom{r} \mbox{ for all } k=1,\ldots,N.
\end{array}
\end{equation}

We can write {\Peps} equivalently as
\begin{equation*}
\min_{x \in \X} \max_{1 \leq k \leq N} d(x,\Geps(\xi^k)).
\end{equation*}
Assuming that the distance used is the Euclidean $d_2(\cdot,\cdot)$, 
the function $x\, \longmapsto \max_k d_2(x,\Geps(\xi^k))$ is known to be \textit{d.c.} for closed sets $\Geps$ \cite{dcprog}, i.e., it can be
written as a difference of two convex functions,
and then the powerful tools of d.c. programming may be used to find a globally optimal solution if
{\Peps} is low-dimensional \cite{BLanqueroCH}, or to design heuristics for more general cases
\cite{LeThi}. 

\subsubsection{Convex programming problems}
\label{sec-convex-finite}
We start with optimization problems $\rm P(\xi)$ that have convex 
sets $\Geps(\xi)$ for all $\xi \in \cU$. This is the case if the functions $F$ and $f$ 
of $\rm P(\xi)$ are quasiconvex for all fixed scenarios $\xi$, and $\X$ is convex.
We furthermore assume that $d$ is convex, which is the case, e.g., when $d$ has been derived from a norm, i.e. $d(x,y)=\|y-x\|$ 
for some norm $\| \cdot \|$.

Let us fix $\xi$. Then the function $\R^n \ni x \longmapsto d(x,\F(\xi))$
describes the distance between a point and a convex set 
and is hence convex. 
We conclude that also $\reps(x,\cU)$ is convex, being the maximum of a finite set of convex functions.
\medskip

\begin{lemma}
\label{lem-convex}
Consider an uncertain optimization problem $\rm (P(\xi), \xi \in \cU)$ with quasiconvex objective
function $f(\cdot,\xi)$ and quasiconvex constraints $F(\cdot,\xi)$ for any fixed $\xi$. Let $\X \subseteq \R^n$ be 
convex, $\cU$ be a finite set and $d$ be convex.
Then problem {\Peps} is a convex optimization problem.
\end{lemma}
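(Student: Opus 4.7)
The plan is to verify that both the feasible region and the objective function of {\Peps} (in its equivalent form (Rec'($\varepsilon$))) are convex. The feasible region is simply $\X$, which is convex by hypothesis, so everything reduces to showing that $\reps(\cdot,\cU)$ is a convex function on $\R^n$.

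First I would show that each set $\Geps(\xi)$ is convex. Recall
\[
\Geps(\xi) = \X \cap \{y : F(y,\xi) \leq 0\} \cap \{y : f(y,\xi) \leq \varepsilon\}.
\]
Since $F(\cdot,\xi)$ is quasiconvex (componentwise), each set $\{y : F_i(y,\xi) \leq 0\}$ is a sublevel set of a quasiconvex function, hence convex; likewise $\{y : f(y,\xi) \leq \varepsilon\}$ is convex because $f(\cdot,\xi)$ is quasiconvex. The intersection of these convex sets with the convex set $\X$ is convex.

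Next I would argue that for each fixed $\xi$ the map $x \mapsto d(x,\Geps(\xi))$ is convex. Here I interpret ``$d$ is convex'' in the sense of the paper, i.e., $d(x,y)$ is jointly convex in $(x,y)$ (which holds in particular when $d(x,y) = \|y-x\|$ for a norm). Then for $x_0,x_1 \in \R^n$, $\lambda \in [0,1]$, and any $y_0,y_1 \in \Geps(\xi)$, the convex combination $y_\lambda = (1-\lambda)y_0 + \lambda y_1$ lies in $\Geps(\xi)$ by Step~1, and joint convexity of $d$ gives
\[
d((1-\lambda)x_0 + \lambda x_1, y_\lambda) \leq (1-\lambda)d(x_0,y_0) + \lambda d(x_1,y_1).
\]
Taking the infimum over $y_0,y_1 \in \Geps(\xi)$ on the right-hand side yields
\[
d((1-\lambda)x_0 + \lambda x_1, \Geps(\xi)) \leq (1-\lambda) d(x_0,\Geps(\xi)) + \lambda d(x_1,\Geps(\xi)),
\]
so $x \mapsto d(x,\Geps(\xi))$ is convex.

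Finally, since $\cU$ is finite, $\reps(x,\cU) = \max_{\xi \in \cU} d(x,\Geps(\xi))$ is the pointwise maximum of finitely many convex functions, hence convex. Together with the convex feasible region $\X$, this shows that {\Peps} is a convex optimization problem. The only real subtlety is the interpretation of the convexity assumption on $d$: quasiconvexity of $F(\cdot,\xi)$ and $f(\cdot,\xi)$ alone would not suffice to make $d(\cdot,\Geps(\xi))$ convex; what is really needed is joint convexity of $d$ in its two arguments, which is automatic whenever $d$ comes from a norm, as the paper points out.
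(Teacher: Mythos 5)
Your proof is correct and follows essentially the same route as the paper: convexity of each $\Geps(\xi)$ from the quasiconvex sublevel sets, convexity of $x \mapsto d(x,\Geps(\xi))$ as the distance to a convex set, and convexity of $\reps(\cdot,\cU)$ as a finite pointwise maximum. Your explicit infimum argument for the distance function (and the remark that ``$d$ convex'' must be read as joint convexity in $(x,y)$) merely fills in a step the paper states as a known fact.
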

\medskip

In order to solve {\Peps} one can hence apply algorithms suitable for convex
programming, e.g., subgradient or bundle methods \cite{Bundle-Book,HUL}.
In particular, if {\Peps} is unconstrained in $x$, a necessary 
and sufficient condition for a point $x^*$  to be an optimal solution is
\[ 0 \in \partial(\reps(x^*, \cU)), \]
i.e., if $0$ is contained in the subdifferential of $\reps(\cdot,\cU)$ at the point $x^*$.
By construction of $\reps(\cdot,\cU),$
we obtain
\[ 0 \in {\rm conv} \left\{ \partial d(x^*, \Geps(\xi)): \xi \in WC_\varepsilon(x^*,\cU) \right\} \]
where $WC_\varepsilon(x^*,\cU)$ is the set of
worst-case scenarios (see \cite{HUL}), and $\partial d(x^*, \Geps(\xi))$ is the subdifferential of $d(\cdot,\Geps(\xi))$ at $x^*$.

Now, $\partial d(x^*, \Geps(\xi))$ can be written in terms  of the subdifferential of the distance used,
see \cite{CaFl02}, where also easy representations for polyhedral norms or the Euclidean norm
are presented.  Although we do not know much a priori about the number of worst-case
scenarios, we do not need to investigate all possible subsets but
may restrict our search to sets which do not have
more than \mbox{$n+1$} elements as is shown in our next result. 
This may be helpful in problems with a large number of scenarios but low dimension $n$ for the decisions.
\medskip

\begin{theorem}
\label{theo-helly}
Let $\cU$ be finite with cardinality of at least $n+1$.
Let $\X = \R^n$. Suppose {\Peps} attains its optimum at some $x^*$, and that
for each $\xi$ the functions
$F(\cdot,\xi)$ and $f(\cdot,\xi)$ are quasiconvex. Let $d$ be convex.
Then there exists a subset $\overline{\cU} \subseteq \cU$
of scenarios with $ 2 \leq |\overline{\cU}| \leq n+1$ such that
\[\reps^*(\cU) = \reps(x^*, \cU)=\reps(x^*, \overline{\cU})=\reps^*(\overline{\cU}). \]
\end{theorem}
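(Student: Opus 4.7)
The plan is to invoke Helly's theorem on sublevel sets of the convex distance functions $x\mapsto d(x,\Geps(\xi))$, and then use a pigeonhole argument over the finite family of $(n+1)$-subsets of $\cU$ to pass to a single witnessing subset at the optimal radius.

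Set $r^*:=\reps^*(\cU)=\reps(x^*,\cU)$. Since $f(\cdot,\xi)$ and $F(\cdot,\xi)$ are quasiconvex and $\X=\R^n$, each set $\Geps(\xi)=\{y:F(y,\xi)\le 0,\;f(y,\xi)\le\varepsilon\}$ is convex (and closed, by the continuity hypothesis in the paper's setup). Since $d$ is a convex metric, the function $x\mapsto d(x,\Geps(\xi))$ is convex, so the sublevel set
\[
B_\xi(r):=\{x\in\R^n:d(x,\Geps(\xi))\leq r\}
\]
is convex for every $r\ge 0$. If $r^*=0$ then $x^*\in\Geps(\xi)$ for all $\xi\in\cU$, and any two-element subset $\overline{\cU}\subseteq\cU$ satisfies all four equalities trivially, so assume $r^*>0$.

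For every $r\in[0,r^*)$, by definition of $r^*$ we have $\bigcap_{\xi\in\cU}B_\xi(r)=\emptyset$. Since the $B_\xi(r)$ are convex sets in $\R^n$ and $|\cU|\ge n+1$, Helly's theorem yields a subset $\overline{\cU}(r)\subseteq\cU$ with $|\overline{\cU}(r)|\le n+1$ and $\bigcap_{\xi\in\overline{\cU}(r)}B_\xi(r)=\emptyset$. Now $\cU$ is finite, so the collection of subsets of $\cU$ of size at most $n+1$ is finite as well. Choose a sequence $r_k\nearrow r^*$; by pigeonhole, some fixed subset $\overline{\cU}$ equals $\overline{\cU}(r_k)$ for infinitely many $k$, and by passing to a subsequence we may assume this holds for all $k$.

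It remains to show $\reps^*(\overline{\cU})=r^*$. Given any $x\in\R^n$ and any $k$, the emptiness $\bigcap_{\xi\in\overline{\cU}}B_\xi(r_k)=\emptyset$ forces some $\xi_k\in\overline{\cU}$ with $d(x,\Geps(\xi_k))>r_k$, so $\max_{\xi\in\overline{\cU}}d(x,\Geps(\xi))>r_k$; letting $k\to\infty$ gives $\reps(x,\overline{\cU})\ge r^*$, hence $\reps^*(\overline{\cU})\ge r^*$. The reverse chain $\reps^*(\overline{\cU})\le\reps(x^*,\overline{\cU})\le\reps(x^*,\cU)=r^*$ is immediate from $\overline{\cU}\subseteq\cU$, so all four quantities coincide with $r^*$. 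Finally $|\overline{\cU}|\ge 2$: a singleton $\overline{\cU}=\{\xi\}$ would yield $\reps^*(\overline{\cU})=\inf_x d(x,\Geps(\xi))=0<r^*$, a contradiction. The main obstacle is that Helly's theorem alone produces a subset that could in principle depend on $r$; the key idea is to use finiteness of $\cU$ to extract a single $\overline{\cU}$ that witnesses empty intersection along a whole sequence $r_k\to r^*$, after which the sandwich inequality closes the proof.
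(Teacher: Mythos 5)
Your proof is correct, but it takes a genuinely different route from the paper. The paper works locally at the optimizer: it invokes Lemma~\ref{le:geq2} to get at least two worst-case scenarios, writes the first-order optimality condition $0 \in {\rm conv}\left\{ \partial d(x^*,\Geps(\xi)) : \xi \in WC_\varepsilon(x^*,\cU)\right\}$ via the max-rule for subdifferentials of finitely many convex functions, and then applies Carath\'eodory's theorem in $\R^n$ to thin the active scenarios down to at most $n+1$; the resulting $\overline{\cU}$ is automatically a subset of $WC_\varepsilon(x^*,\cU)$, which is slightly more structural information than the theorem asks for. You instead argue globally and dually: Helly's theorem applied to the convex sublevel sets $B_\xi(r)$ for $r<r^*$, followed by a pigeonhole over the finitely many candidate subsets along $r_k\nearrow r^*$ and the sandwich $r^*\le \reps^*(\overline{\cU})\le\reps(x^*,\overline{\cU})\le\reps(x^*,\cU)=r^*$. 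This is not surprising in hindsight---Helly and Carath\'eodory are the same dimension count---but your version avoids both the subdifferential calculus (in particular the representation of $\partial d(x^*,\Geps(\xi))$) and the appeal to Lemma~\ref{le:geq2}, needing only convexity of $x\mapsto d(x,\Geps(\xi))$; the price is the limiting argument, which you correctly identify as the crux (a single application of Helly at one $r<r^*$ only gives $\reps^*(\overline{\cU})\ge r$, not $\ge r^*$). As a minor simplification, you could apply Helly once to the \emph{open} convex sets $\{x : d(x,\Geps(\xi))<r^*\}$, whose total intersection is empty by optimality; this yields the witnessing subset directly and dispenses with the sequence and the pigeonhole. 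Your closing arguments for $\reps^*(\overline{\cU})\ge r^*$ and for $|\overline{\cU}|\ge 2$ (a singleton would give radius $0$ since $\Geps(\xi)\neq\emptyset$) are both sound.
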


\begin{proof}
Let $x^*$ be optimal for {\Peps}. The result is trivial if $\reps(x^*,\cU) = 0$: 
take any collection of $n+1$  scenarios.
Hence, we may assume  $\reps(x^*,\cU) > 0,$ which implies that $x^*$ does not belong to all sets $\Geps(\xi).$

By Lemma \ref{le:geq2},  $|WC_\varepsilon(x^*,\cU) | \geq 2$.
If $|WC_\varepsilon(x^*,\cU) | \leq n+1,$ then we are done. Otherwise,
$|WC_\varepsilon(x^*,\cU) | > n+1,$  we have by the optimality of $x^*$ and convexity of the functions $d(\cdot,\Geps(\xi)),$ that
\[ 0 \in {\rm conv} \left\{ \partial d(x^*, \Geps(\xi)): \xi \in WC_\varepsilon(x^*,\cU) \right\} \]
By Carath\'eodory's theorem, $WC_\varepsilon(x^*,\cU)$ contains a subset $\overline{\cU},$
$1 \leq |\overline{\cU}| \leq n+1$
such that
$0 \in {\rm conv} \left\{ \partial d(x^*, \Geps(\xi)): \xi \in \overline{\cU}\right\}.$
Such $\overline{\cU}$ clearly satisfies the conditions stated.
\end{proof}

\subsubsection{Problems with linear constraints and polyhedral norms as recovery costs}
\label{sec-poly-finite}

As in the section before, we assume a finite uncertainty set
$\cU=\{\xi^1,\ldots,\xi^N\}$.
Let us now consider the case that
all sets $\Geps(\xi^k)$, $k=1,\ldots,N$ are polyhedral sets.
More precisely, we consider problems of type
\begin{align*}
\mbox{P($\xi$)} \hspace{1cm} \min\ &f(x,\xi):= c(\xi)^t x \\
\mbox{s.t. }   & F(x,\xi):=A(\xi) x - b(\xi)  \leq 0 \\
& x \in \X
\end{align*}
with a finite uncertainty set $\cU=\{\xi^1,\ldots,\xi^N\}$, linear
constraints $A(\xi)x \le b(\xi)$ for every $\xi \in \cU$, a linear objective
function $c(\xi)^tx$ and a polyhedron $\X.$
\medskip

Furthermore, let us  assume that the distance $d$ is induced by a block norm
$\| \cdot \|$, i.e., a norm whose unit ball is a polytope, see~\cite{ww85,witzgall}.
The most prominent examples for block norms are the Manhattan ($d_1$) and the maximum ($d_\infty$) norm, which
both may be suitable to represent recovery
costs: In the case that the recovery costs are obtained by adding single costs of each
component, the Manhattan norm
is the right choice. The maximum norm may represent the recovery
time in the case that a facility has to be moved along each coordinate (or a schedule
has to be updated by a separate worker in every component) and
the longest time determines the time for the complete update.

We also remark that it is possible to approximate any given norm arbitrarily close
by block norms, since the class of block norms is a dense subset
of all norms, see \cite{ww85}.
Thus, the restriction to the class of block norms may not be a real
restriction in a practical setting.
\medskip

The goal of this section is to show that under the assumptions above,
{\Peps} is a linear program.
\medskip

We start with some notation. Given a norm $\| \cdot \|$, let
\[ B =\{x \in \R^n: \| x\| \leq 1 \} \]
denote its unit ball. Recall that the unit ball of a block norm $\| \cdot \|$ is a
full-dimensional convex polytope which is symmetric with respect to the origin. Since such
a polytope has a finite number $S$ of extreme points, we may denote in
the following the extreme points of  $B$  as
\[
 \mathrm{Ext}(B) = \{e_i : 1\leq i \leq S\}.
\]
Since $B$ is symmetric  with respect to the origin,
 $S\in\N$ is always an even number and for any $e_i\in \mathrm{Ext}(B)$ there exists another
$e_j\in\mathrm{Ext}(B)$ such that $e_i=-e_j$. Its dual (or polar) norm defined
as $\|x\|_0:=\max\{x^ty: \|y \| \leq 1\}$ has the unit ball
\[
B^0=\{x \in \R^n: x^ty \leq 1 \mbox{ for all } y \in B\}.
\]
It is known that $B^0$ is again a polyhedral norm with extreme points
\[
\mathrm{Ext}(B^0) = \{e^0_i : 1\leq i \leq S^0\},
\]
where $S^0$ is the number of facets of $B$ (see, e.g., \cite{Roc70}).
\medskip

The following property is crucial for the linear
programming formulation of {\Peps}. It shows that it is sufficient
to consider only the extreme points $\mathrm{Ext}(B)$ of either the unit ball $B$ of the block norm,
or of the unit ball $B^0$ of its polar norm
in order to compute $\Vert x\Vert$ for any point $x \in\R^n$.
\medskip

\begin{lemma}[\cite{ww85}]\label{thm:poly-distance}
Let $\mathrm{Ext}(B) = \{e_i : 1\leq i \leq S\}$ be the extreme points of a block norm $\| \cdot \|$ with unit ball
$B$ and let $\mathrm{Ext}(B^0) = \{e^0_i : 1\leq i \leq S^0\}$ be the extreme points of its polar norm with unit ball
$B^0$. Then $\| \cdot \|$ has the following two characterizations:
\begin{align*}
  \| x \|= \min\left\lbrace \sum_{i=1}^S  \beta_i : x = \sum_{i=1}^S \beta_i e_i,~\beta_i\geq 0 \:\forall\: i=1,\ldots,S\right\rbrace
\end{align*}
and
\begin{align*}
  \| x \|= \max_{i=1,\ldots,S^0} x^t e_i^0.
\end{align*}
\end{lemma}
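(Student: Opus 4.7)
The two formulas are the classical \emph{primal} (Minkowski-functional) and \emph{dual} (support-function) representations of a polytopal norm, so the plan is to prove them separately, exploiting that $B$ is a symmetric, full-dimensional polytope with vertex set $\{e_i\}_{i=1}^S$ and that its polar $B^0$ is a polytope with vertex set $\{e_i^0\}_{i=1}^{S^0}$.

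For the first identity, I would use that $B = \mathrm{conv}\{e_1,\ldots,e_S\}$ and, by symmetry, $0 \in \mathrm{int}(B)$. For $x \neq 0$, the point $x/\|x\|$ lies on $\partial B \subseteq B$ and can therefore be written as a convex combination $x/\|x\| = \sum_i \lambda_i e_i$ with $\lambda_i \geq 0$ and $\sum_i \lambda_i = 1$. Setting $\beta_i := \|x\|\lambda_i$ yields a feasible decomposition of $x$ with $\sum_i \beta_i = \|x\|$, so the minimum is at most $\|x\|$. Conversely, for any feasible representation $x = \sum_i \beta_i e_i$ with $s := \sum_i \beta_i > 0$, the point $x/s$ is a convex combination of the $e_i$, hence lies in $B$, which gives $\|x\| \leq s$. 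The case $x = 0$ is trivial.

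For the second identity I would invoke the finite-dimensional bipolar theorem, which gives $\|x\| = \sup\{x^t y : \|y\|_0 \leq 1\} = \sup_{y \in B^0} x^t y$. Since $B^0$ is a polytope with vertex set $\{e_i^0 : 1 \leq i \leq S^0\}$ and $y \mapsto x^t y$ is linear, the supremum is attained at a vertex of $B^0$, yielding $\|x\| = \max_i x^t e_i^0$.

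The step I expect to be the main obstacle is the dual one: namely, verifying that $B^0$ is indeed a polytope whose vertices are precisely the $e_i^0$ (this is the vertex-facet duality between $B$ and $B^0$, so that $S^0$ genuinely counts the facets of $B$) and justifying the bipolar identity $\|x\| = \|x\|_{00}$ that underlies the dual representation. Both are standard facts from convex analysis for symmetric, full-dimensional polytopes in $\R^n$ and can simply be cited from \cite{Roc70}; once they are granted, the rest of the argument is elementary convex-combination bookkeeping together with the vertex-attainment principle for linear objectives over a polytope.
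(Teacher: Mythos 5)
The paper does not prove this lemma at all: it is imported verbatim from \cite{ww85} and used as a black box, so there is no internal proof to compare against. Your argument is the standard and correct one --- the gauge/convex-combination argument for the primal formula (including the observation that any feasible representation of $x\neq 0$ has $\sum_i\beta_i>0$, so $x/s\in B$ gives the lower bound), and the bipolar identity plus vertex-attainment of linear functionals over the compact polytope $B^0$ for the dual formula. The two facts you flag as the remaining obstacles (that $B^0$ is a polytope whose vertices correspond to the facets of $B$, and that $\|x\|=\|x\|_{00}$) are indeed standard for a symmetric full-dimensional polytope with $0$ in its interior and are correctly deferred to \cite{Roc70}; nothing in your outline would fail.
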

\medskip

Lemma~\ref{thm:poly-distance} implies that we can compute $\Vert x-y\Vert$ for any pair $x,y\in\R^n$ by linear 
programming.
Thus, our assumptions on the sets $\Geps(\xi^k)$ and Lemma~\ref{thm:poly-distance} give rise to the following linear 
formulations of {\Peps}, if $\X$ is a polyhedron:
\begin{alignat}{6} 
 \min \quad & r \nonumber \\
\mbox{s.t. } & A(\xi^k)y^k & \quad \leq \quad & b(\xi^k) &\mbox{ for all } k=1,\ldots,N \label{lp1}\\
& c(\xi^k)^t y^k & \quad \leq \quad & \varepsilon &\mbox{ for all } k=1,\ldots,N
\label{lp1b}\\
 & y^k - x  & \quad = \quad &   \sum_{i=1}^S \beta^k_i e_i  &\mbox{ for all } k=1,\ldots,N \label{lp2}\\
   & \sum_{i=1}^S  \beta^k_i & \quad \leq \quad& r  &\mbox{ for all } k=1,\ldots,N \label{lp3}\\
& x, y^k & \quad \in \quad & \X &\mbox{ for all } k=1,\ldots,N \label{lp6} \\
& r,\beta^k_i & \quad \geq \quad & 0 &   \mbox{ for all } k=1,\ldots,N,\ i=1,\ldots,S \label{lp5}
\end{alignat}
Note that constraints (\ref{lp1})  and (\ref{lp1b}) are just the definition of the sets
$\Geps(\xi^k)$. Furthermore, (\ref{lp2}) and (\ref{lp3}) 
together ensure 
that $\Vert x - y^k\Vert \leq r$ for all
$k=1,\ldots,N$. Hence, the linear program is
equivalent to the formulation~\eqref{form:recfeas-finite}
for a finite set of scenarios each of them having
a polyhedron as feasible set and if a block norm is used as distance measure.
In this case we have hence shown that {\Peps} can be formulated as
a linear program. In order to use the second characterization of block
norms in Lemma~\ref{thm:poly-distance} we replace (\ref{lp2}) and (\ref{lp3}) by
\begin{equation}
\left(e_i^0\right)^t (y^k-x) \leq r \quad \mbox{ for all }  k=1,\ldots,N,\ i=1,\ldots,S^0 
\end{equation}
to ensure that the value of $\|x-y^k\|$ is correctly computed.
We summarize our findings in the following result.
\medskip

\begin{theorem}
\label{theo-finite}
Consider an uncertain linear optimization problem $\rm (P(\xi), \xi \in \cU)$ 
Let $\cU=\{\xi^k: k=1,\ldots,N\}$
be a finite set and let $d$ be induced by a block norm. 
Let $\X \subseteq \R^n$ be a polyhedron.
Then {\Peps} can be solved by linear programming.

If the number of constraints defining $\X$, and either the number  
of extreme points of $B$ or the number of facets of $B$ depend at most polynomially on the dimension $n$, then 
{\Peps} can be solved in polynomial time.
\end{theorem}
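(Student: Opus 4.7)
The plan is to explicitly exhibit two linear programming formulations of {\Peps} and then count variables and constraints to obtain the polynomial-time statement. Both formulations are built from formulation~\eqref{form:recfeas-finite}, which by assumption on $\X$ and on the problem data already has linear constraints $A(\xi^k)y^k\le b(\xi^k)$, $c(\xi^k)^t y^k\le \varepsilon$, and $y^k,x\in\X$; the only nontrivial task is to linearise the recovery constraint $d(x,y^k)=\|x-y^k\|\le r$ for the block norm $\|\cdot\|$.

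First, I would use the primal characterisation in Lemma~\ref{thm:poly-distance}. For each $k$, $\|y^k-x\|\le r$ holds if and only if $y^k-x$ can be written as a non-negative combination $\sum_{i=1}^S \beta_i^k e_i$ of the extreme points of $B$ with $\sum_i \beta_i^k\le r$. This is exactly constraints \eqref{lp2}--\eqref{lp3}, and the resulting system \eqref{lp1}--\eqref{lp5} is linear. Correctness follows by observing: if $(x,y^1,\dots,y^N,r,\beta)$ is feasible, then by Lemma~\ref{thm:poly-distance} $\|y^k-x\|\le \sum_i\beta_i^k\le r$, so $(x,y^1,\dots,y^N)$ is feasible for \eqref{form:recfeas-finite} with the same $r$; conversely, if $(x,y^1,\dots,y^N,r)$ is feasible for \eqref{form:recfeas-finite}, Lemma~\ref{thm:poly-distance} yields $\beta^k$ realising $\|y^k-x\|=\sum_i\beta_i^k\le r$. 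Hence the LP optimum equals $\reps^*(\cU)$.

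The second formulation uses the dual characterisation $\|z\|=\max_i (e_i^0)^t z$: the constraint $\|y^k-x\|\le r$ is equivalent to the finite family $(e_i^0)^t(y^k-x)\le r$ for $i=1,\dots,S^0$, which replaces \eqref{lp2}--\eqref{lp3}. Correctness is immediate from Lemma~\ref{thm:poly-distance}. Either way, {\Peps} reduces to an LP, which proves the first assertion.

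For the complexity statement, I would simply count. Assume $\X$ is given by $p(n)$ linear constraints with $p$ a polynomial. The primal formulation has $1+n+Nn+NS$ variables and $O\bigl(N(m+1+n+1) + Np(n)\bigr)$ constraints, polynomial in $n$ and $N$ when $S$ is polynomial in $n$. The dual formulation has $1+n+Nn$ variables and $O\bigl(N(m+1+S^0)+Np(n)\bigr)$ constraints, polynomial when $S^0$ is polynomial in $n$. Since linear programs are solvable in polynomial time in the bit size of the input (e.g.\ by the ellipsoid or interior point methods), picking whichever of the two formulations has polynomial size yields the polynomial-time algorithm. The only subtlety, and the step I would state most carefully, is that we genuinely need one of $S$ or $S^0$ to be polynomial; for general block norms, the other can be exponential in $n$ (e.g.\ $\|\cdot\|_\infty$ has $2n$ extreme points but $2^n$ facets on its dual), and the theorem correctly allows us to pick the smaller side.
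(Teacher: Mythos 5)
Your proposal is correct and follows essentially the same route as the paper: it builds on formulation~\eqref{form:recfeas-finite}, linearises $\|x-y^k\|\le r$ via either the primal (extreme points of $B$) or the dual (extreme points of $B^0$, i.e.\ facets of $B$) characterisation of Lemma~\ref{thm:poly-distance}, and then counts the size of whichever formulation is polynomial; the paper presents exactly these two LPs and states the theorem as a summary of that discussion. One minor slip in your illustrative aside: the unit ball of $\|\cdot\|_\infty$ has $2^n$ extreme points and $2n$ facets (it is $\|\cdot\|_1$ whose ball has $2n$ extreme points and $2^n$ facets), so the roles in your parenthetical example are swapped, though this does not affect the argument.
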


We note that block norms may be generalized to the broader class of
polyhedral gauges where the symmetry assumption on the unit ball is
dropped (see e.g., \cite{nickel09}). Nevertheless it is readily shown that
Lemma~\ref{thm:poly-distance} applies to polyhedral gauges as well. Hence, it
follows that Theorem~\ref{theo-finite} also holds for distance
functions derived from polyhedral gauges. 

\subsubsection{Problems with hyperplanes as feasible sets}

We consider a special case in which {\Peps} can be rewritten as
a linear program, even though the distance measure does not need to be derived
from a block norm, namely if the sets $\Geps(\xi)$ are all hyperplanes or
halfspaces. Before we show the resulting linear program for this case, 
we consider some situations in which this happens:

\begin{example}
Let $d$ be a distance derived from a norm, and let $\X=\R^n$.
\begin{enumerate}
\item For feasibility problems of type
\begin{align*}
\mbox{P$(a,b)$} \hspace{1cm} \min\ &const \\
\mbox{s.t. }  & F(x,(a,b)):=a^t x - b = 0 
\end{align*}
with $\cU = \{(a^1,b^1),\ldots,(a^N,b^N)\}$, 
$a^1,\ldots,a^N \neq 0$ we obtain $\Geps(a^k,b^k)=\{x: {a^k}^t x-b^k=0\}$
for all $\varepsilon > const$.
\item The same holds for problems
\[ P(\xi) \ \ \min\{f(x,\xi): x \in \F(\xi)\} \mbox{ for } \xi \in \cU\]
if $\F(\xi)$ is a hyperplane for each $\xi \in \cU$ and
$\varepsilon > f(x,\xi)$ for all $x \in \F(\xi)$.
\item For unconstrained uncertain linear optimization of the form
\[ P(\xi)  \ \ \min\{c(\xi)^tx: x \in  \R^n\} \]
the resulting sets $\Geps(\xi^k)=\{x: c(\xi)^tx \leq \varepsilon\}$ 
are halfspaces.
\end{enumerate}
\end{example}

Let us first consider the case of hyperplanes:
For $\xi^k=(a^k,b^k)$, let $\Geps(\xi)=H_{a^k,b^k} = \{x\in\R^n : {a^k}^tx = b^k\}$ be a hyperplane.
Then {\Peps} is given by
\begin{align*}
 \min \ &r\\
\mbox{s.t. } &d(x,H_{a^k,b^k} )\leq r \mbox{ for all } k=1,\ldots,N\\
 &x \in \R^n, r \in \R,
\end{align*}
Recall the point-to-hyperplane distance \cite{PlCa01}
\[d(x,H_{a,b}) = \frac{|a^tx - b|}{\Vert a \Vert^\circ},\]
where $\|\cdot\|^\circ$ denotes the dual norm to $\|\cdot\|$.
As the values of $\Vert a^k \Vert^\circ$
can be precomputed and the absolute value linearized, we gain a linear program
\begin{align}
 \min \ &r\nonumber\\
\mbox{s.t. } & -r \le \frac{{a^k}^tx - b}{\Vert a^k \Vert^\circ} \leq r \mbox{ for all } k=1,\ldots,N\label{recfeas-linprog-2}\\
 &x \in \R^n, r \in \R.\nonumber
\end{align}
\medskip

For halfspaces 
$\Geps(\xi^k)=H^+_{a^k,b^k} = \{x\in\R^n : {a^k}^tx \leq b^k\}$ 
instead of hyperplanes, the distance is given by
\[d(x,H^+_{a,b}) = \frac{|a^tx - b|^+}{\Vert a \Vert^\circ},\]
where $|a^tx - b|^+ = \max\{a^tx - b, 0\}$, resulting in the linear program
\begin{align}
 \min \ &r\nonumber\\
\mbox{s.t. } & \frac{{a^k}^tx - b}{\Vert a^k \Vert^\circ} \leq r \mbox{ for all } k=1,\ldots,N
\label{recfeas-linprog-3}\\
& r\ge 0 \nonumber\\
 &x \in \R^n, r \in \R \nonumber.
\end{align}

\begin{theorem}
Consider an uncertain optimization problem 
with finite uncertainty set and sets $\Geps(\xi)$ that are hyperplanes or halfspaces. Let $\X=\R^n$ and let $d$ be derived from a norm $\|\cdot\|$.
Then {\Peps} can be formulated as linear program (see \eqref{recfeas-linprog-2} and \eqref{recfeas-linprog-3}) and be solved in
polynomial time, provided that the dual norm of $\|\cdot\|$ can be evaluated in polynomial time.
\end{theorem}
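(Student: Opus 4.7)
The plan is essentially a verification that the two linear programs displayed immediately before the theorem, namely \eqref{recfeas-linprog-2} and \eqref{recfeas-linprog-3}, faithfully encode {\Peps} in the two respective cases, and then to invoke the polynomial-time solvability of linear programming.

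First I would handle the hyperplane case. For each $\xi^k$ the set $\Geps(\xi^k)$ is a hyperplane $H_{a^k,b^k}$, so by the classical point-to-hyperplane formula cited from \cite{PlCa01} we have $d(x,\Geps(\xi^k)) = |{a^k}^t x - b^k|/\|a^k\|^\circ$. Since $\cU$ is finite, the values $\|a^k\|^\circ$, $k=1,\ldots,N$, are constants that can be precomputed in polynomial time, one evaluation of the dual norm per scenario. Minimizing $\reps(x,\cU) = \max_k d(x,\Geps(\xi^k))$ is therefore the same as minimizing an auxiliary variable $r$ subject to $|{a^k}^t x - b^k|/\|a^k\|^\circ \leq r$ for every $k$; linearizing the absolute value in the standard way (splitting into two inequalities) gives exactly \eqref{recfeas-linprog-2}.

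For the halfspace case, $\Geps(\xi^k) = H^+_{a^k,b^k}$ and $d(x,H^+_{a^k,b^k}) = |{a^k}^t x - b^k|^+/\|a^k\|^\circ$, which is $0$ when $x$ satisfies ${a^k}^t x \leq b^k$ and $({a^k}^t x - b^k)/\|a^k\|^\circ$ otherwise. Combined with $r\geq 0$, the constraint $d(x,H^+_{a^k,b^k})\leq r$ is therefore equivalent to the single inequality $({a^k}^t x - b^k)/\|a^k\|^\circ \leq r$, yielding \eqref{recfeas-linprog-3}. In both cases the resulting program is linear in the $n+1$ variables $(x,r)$ and has $O(N)$ constraints.

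Finally, the polynomial-time claim follows in two steps: the preprocessing step computes the $N$ numbers $\|a^k\|^\circ$, which by assumption each takes polynomial time; and the resulting LP with $n+1$ variables and $O(N)$ constraints is solvable in polynomial time by standard interior-point or ellipsoid methods. I do not anticipate any real obstacle here; the only subtlety is ensuring that the distance formulas from a point to a hyperplane and to a halfspace with respect to a general norm $\|\cdot\|$ (not only Euclidean) are indeed those stated, but this is precisely the content of the reference \cite{PlCa01} and can be invoked directly.
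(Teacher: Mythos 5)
Your proposal is correct and follows essentially the same route as the paper, which proves the theorem implicitly through the in-text derivation preceding it: apply the point-to-hyperplane (resp.\ point-to-halfspace) distance formula from \cite{PlCa01}, precompute the dual-norm values $\Vert a^k\Vert^\circ$, linearize the absolute value (resp.\ use $r\ge 0$ to absorb the positive part), and solve the resulting linear program in polynomial time. No gaps.
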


\section{Reduction approaches}
\label{sec-infinite}

In this section we analyze recoverable-robust solutions for different uncertainty sets $\cU$, and hence write
Rec($\cU$), ${\bf f_{\cU}}$ and ${\bf r_{\cU}}$  to emphasize the uncertainty set that is considered:
\begin{eqnarray*}
\mbox{Rec($\cU$)} \hspace{5mm} & {\rm minimize} \ 
\left( {\bf f_{\cU}}(y), {\bf r_{\cU}}(x,y) \right)=
\left( \sup_{\xi \in \cU} f(y(\xi),\xi), \sup_{\xi \in \cU} d(x,y(\xi)) \right)\\
& \mbox{s.t. } F(y(\xi),\xi) \leq 0\ \ \mbox{ for all } \xi \in \cU\\
& x \in \X, y:\cU \to \X
\end{eqnarray*}
Recall that a solution $x$ is
\emph{recoverable-robust with respect to $\cU$} if there exists $y:\cU \to \X$ such that 
$(x,y)$ is Pareto-efficient for Rec($\cU$).

The main goal of this section is to reduce the set $\cU$ to a smaller (maybe even finite)
set $\cU' \subseteq \cU$, such that the set of recovery-robust solutions does not change. 
This is the case if we can extend any feasible solution $(x,y')$ for Rec($\cU'$) to a feasible
solution $(x,y)$ for Rec($\cU$) without changing the objective function values.

\begin{lemma}
\label{lem-reductionlemma}
Let $\cU' \subseteq \cU$. If for all feasible solutions $(x,y')$ of Rec($\cU'$) there exists 
$y: \cU \to \X$ such that 
\begin{itemize}
\item $(x,y)$ is feasible for Rec($\cU$), i.e., $F(y(\xi),\xi) \leq 0$ for all $\xi \in \cU$, and
\item  ${\bf f_{\cU}}(y)={\bf f_{\cU'}}(y')$ and ${\bf r_{\cU}}(x,y)={\bf r_{\cU'}}(x,y')$
\end{itemize}
then Rec($\cU$) and Rec($\cU'$) have the same recoverable-robust solutions.
\end{lemma}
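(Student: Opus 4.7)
The plan is to prove the two implications separately, going between recoverable-robustness for $\cU'$ and for $\cU$, and in each direction to rely on the hypothesis to transport the witness while using the obvious restriction in the opposite direction. The key observation used throughout is that for any map $y:\cU\to\X$ one has $f_{\cU'}(y|_{\cU'})\le f_{\cU}(y)$ and $r_{\cU'}(x,y|_{\cU'})\le r_{\cU}(x,y)$, simply because taking a supremum over a smaller set yields a smaller (or equal) value; and that restriction preserves feasibility since $\cU'\subseteq\cU$.

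\emph{Direction 1 ($\cU\Rightarrow\cU'$).} Let $x$ be recoverable-robust for Rec($\cU$), with witness $y:\cU\to\X$ such that $(x,y)$ is Pareto efficient. Set $y':=y|_{\cU'}$, which is feasible for Rec($\cU'$). Suppose for contradiction that $(x,y')$ is \emph{not} Pareto efficient for Rec($\cU'$); then there exists a feasible $(x'',y'')$ for Rec($\cU'$) with $f_{\cU'}(y'')\le f_{\cU'}(y')$ and $r_{\cU'}(x'',y'')\le r_{\cU'}(x,y')$, at least one inequality strict. Apply the hypothesis to $(x'',y'')$ to obtain $\tilde y:\cU\to\X$ with $(x'',\tilde y)$ feasible for Rec($\cU$), $f_{\cU}(\tilde y)=f_{\cU'}(y'')$ and $r_{\cU}(x'',\tilde y)=r_{\cU'}(x'',y'')$. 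Chaining these identities with the trivial restriction inequalities yields $f_{\cU}(\tilde y)\le f_{\cU}(y)$ and $r_{\cU}(x'',\tilde y)\le r_{\cU}(x,y)$, with the strict inequality preserved (the step where strictness could be lost is exactly the restriction inequality, but here the strict inequality sits on the $\cU'$-side, before the restriction step, so it carries through). This contradicts Pareto efficiency of $(x,y)$ for Rec($\cU$).

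\emph{Direction 2 ($\cU'\Rightarrow\cU$).} Let $x$ be recoverable-robust for Rec($\cU'$) with witness $y':\cU'\to\X$. Apply the hypothesis to $(x,y')$ to obtain $y:\cU\to\X$ with $(x,y)$ feasible for Rec($\cU$), $f_{\cU}(y)=f_{\cU'}(y')$ and $r_{\cU}(x,y)=r_{\cU'}(x,y')$. If $(x'',\tilde y)$ dominates $(x,y)$ in Rec($\cU$), restrict to $\tilde y|_{\cU'}$: the restriction inequalities combined with the equalities above give $f_{\cU'}(\tilde y|_{\cU'})\le f_{\cU'}(y')$ and $r_{\cU'}(x'',\tilde y|_{\cU'})\le r_{\cU'}(x,y')$, and the strict inequality, which this time lives on the $\cU$-side, survives the restriction since $f_{\cU'}(\tilde y|_{\cU'})\le f_{\cU}(\tilde y)<f_{\cU}(y)=f_{\cU'}(y')$ (and analogously for $r$). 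This contradicts Pareto efficiency of $(x,y')$ for Rec($\cU'$).

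The only place where care is needed is tracking the strict inequality through the chain of comparisons, because each chain contains both an equality coming from the hypothesis and a (possibly strict) inequality coming from restriction to a sub-supremum. In both directions, however, the strict gap happens to sit on the \emph{same} side as the equality guaranteed by the hypothesis, so no strictness is ever lost. No subtler ingredient is needed; the proof is essentially a bookkeeping argument using the hypothesis together with monotonicity of the sup with respect to set inclusion.
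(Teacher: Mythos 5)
Your proof is correct and takes essentially the same route as the paper's: restrict the recovery map when passing from $\cU$ to $\cU'$, extend it via the lemma's hypothesis when passing back, and in each direction transport a hypothetical dominating solution to the other problem to contradict Pareto efficiency. Your explicit bookkeeping of where the strict inequality sits relative to the restriction step is a point the paper leaves implicit, but the argument is the same.
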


\begin{proof}
Let $(x,y)$ be feasible for Rec($\cU$). Define
\[ y_{|\cU'}:\cU' \to \X \ \mbox{ through } y_{|\cU'}(\xi):=y(\xi) \ \mbox{ for all } \xi \in \cU' \] 
Then $(x,y')$ is feasible for Rec($\cU'$) and ${\bf f_{\cU'}}(y')\leq {\bf f_{\cU}}(y)$, ${\bf r_{\cU'}}(x,y') \leq {\bf r_{\cU}}(x,y)$.
Together with the assumption of this lemma Pareto optimality follows since a solution can be improved by switching between Rec($\cU$) and 
Rec($\cU'$):
\begin{itemize}
\item Let $x$ be recoverable-robust w.r.t $\cU$. Then there exists $y:\cU \to \X$ such that $(x,y)$ is Pareto efficient for Rec($\cU$). Define
$y':=y_{|\cU'}$. Then $(x,y')$ is Pareto-efficient for Rec($\cU'$): Assume that $(\tilde{x},\tilde{y}')$ dominates $(x,y')$. Due to the assumption
of this lemma there exists $(\tilde{x},\tilde{y})$ which is feasible for Rec($\cU$) and 
${\bf f_{\cU}}(\tilde{y})={\bf f_{\cU'}}(\tilde{y}')$ and ${\bf r_{\cU}}(\tilde{x},\tilde{y})={\bf r_{\cU'}}(\tilde{x},\tilde{y}')$, i.e., 
$(\tilde{x},\tilde{y})$ then dominates $(x,y)$, a contradiction.

\item Let $x$ be recoverable-robust w.r.t $\cU'$. Then there exists $y':\cU' \to \X$ such that $(x,y')$ is Pareto-efficient for Rec($\cU'$). Choose
$y$ according to the assumption of this lemma. Then $(x,y)$ is Pareto-efficient for Rec($\cU$): 
Assume that $(\tilde{x},\tilde{y})$ dominates $(x,y)$. Then $(\tilde{x},\tilde{y}_{|\cU'})$ is feasible for Rec($\cU'$) and 
${\bf f_{\cU'}}(\tilde{y}_{|\cU'})\leq{\bf f_{\cU}}(\tilde{y})$ and ${\bf r_{\cU'}}(\tilde{x},\tilde{y}_{|\cU'})\leq{\bf r_{\cU}}(\tilde{x},\tilde{y})$, i.e., 
$(\tilde{x},\tilde{y}_{|\cU'})$ then dominates $(x,y')$, a contradiction.
\end{itemize}
\end{proof}

We now use Lemma~\ref{lem-reductionlemma} to reduce the 
set of scenarios $\cU$. Our first result is similar to 
the reduction rules for set covering problems \cite{TSRB71}.

\begin{lemma}
If P($\xi^2$) is a relaxation of P($\xi^1$) for two scenarios $\xi^1,\xi^2 \in \cU$,
then Rec($\cU$) and Rec($\cU \setminus \{ \xi^2 \}$) have the same recoverable robust solutions, 
i.e., scenario $\xi^2$ may be ignored. 
\end{lemma}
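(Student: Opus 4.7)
The natural plan is to invoke Lemma~\ref{lem-reductionlemma} with $\cU' := \cU \setminus \{\xi^2\}$. What I need is, for every feasible $(x,y')$ of Rec($\cU'$), an extension $y:\cU\to\X$ that reproduces $y'$ on $\cU'$, remains feasible at the newly added scenario $\xi^2$, and does not increase either of the two sup-objectives.

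The construction suggests itself from the relaxation hypothesis: define
\[
y(\xi) := \begin{cases} y'(\xi) & \text{if } \xi \in \cU', \\ y'(\xi^1) & \text{if } \xi = \xi^2. \end{cases}
\]
That is, whenever the adversary realizes the redundant scenario $\xi^2$, we simply reuse the recovery we had chosen for the tighter scenario $\xi^1$. Because P($\xi^2$) is a relaxation of P($\xi^1$), we have $\F(\xi^1)\subseteq \F(\xi^2)$ and $f(\cdot,\xi^2)\le f(\cdot,\xi^1)$ pointwise, so $y(\xi^2)=y'(\xi^1)\in\F(\xi^1)\subseteq \F(\xi^2)$, securing feasibility of $(x,y)$ for Rec($\cU$).

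For the objective values I then check the two sup-criteria termwise. For the recovery cost, only the $\xi^2$-term is new, and $d(x,y(\xi^2))=d(x,y'(\xi^1))$ is already present in ${\bf r_{\cU'}}(x,y')$, so ${\bf r_{\cU}}(x,y)={\bf r_{\cU'}}(x,y')$. For the worst-case objective, the new term $f(y(\xi^2),\xi^2)=f(y'(\xi^1),\xi^2)\le f(y'(\xi^1),\xi^1)$ by the relaxation inequality, so it is dominated by a term already in ${\bf f_{\cU'}}(y')$; hence ${\bf f_{\cU}}(y)={\bf f_{\cU'}}(y')$. Both hypotheses of Lemma~\ref{lem-reductionlemma} are therefore verified, and the conclusion follows.

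Conceptually nothing is hard here; the only real care-point is fixing the precise meaning of ``relaxation'' so that one uses both $\F(\xi^1)\subseteq\F(\xi^2)$ and the pointwise inequality on the objective. If the paper's usage of ``relaxation'' were restricted only to the feasible-set inclusion (constraint-wise uncertainty), then the objective part of the argument would be vacuous (since the linear/objective coefficient in $\xi^2$ would equal that in $\xi^1$), and the proof simplifies further.
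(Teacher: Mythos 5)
Your proof is correct and follows essentially the same route as the paper: both invoke Lemma~\ref{lem-reductionlemma} with $\cU' = \cU\setminus\{\xi^2\}$, extend $y'$ by setting $y(\xi^2):=y'(\xi^1)$, and verify feasibility and the two sup-objectives exactly as the paper does via the relaxation properties $\F(\xi^1)\subseteq\F(\xi^2)$ and $f(\cdot,\xi^2)\le f(\cdot,\xi^1)$.
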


\begin{proof}
We check the condition of Lemma~\ref{lem-reductionlemma}: Let $(x,y')$ be feasible for Rec($\cU \setminus \{\xi^2\}$).
Define 
\[ y:\cU \to \X \ \mbox{ through } y(\xi):=\left\{ \begin{array}{ll} y'(\xi) \mbox{ if } \xi \in \cU \setminus \{\xi^2\} \\ 
y'(\xi^1) \mbox{ if } \xi=\xi^2 \end{array} \right. \]
Then $(x,y)$ is feasible since $F(y(\xi),\xi) \leq 0$ for all $\xi \in \cU \setminus \{\xi^2\}$ and
$F(y(\xi^2),\xi^2)=F(y(\xi^1),\xi^2) \leq 0$ since $F(y(\xi^1),\xi^1)\leq 0$ and P($\xi^2$) is a relaxation of P($\xi^1$).
Furthermore, $f(y(\xi^2),\xi^2) = f(y(\xi^1),\xi^2) \leq f(y(\xi^1),\xi^1)$ implies
\[ {\bf f_{\cU}}(y)=\sup_{\xi \in \cU} f(y(\xi),\xi)= \sup_{\xi \in \cU \setminus \{\xi^2\}} f(y(\xi),\xi)= \sup_{\xi \in \cU \setminus \{\xi^2\}} f(y'(\xi),\xi)= 
{\bf f_{\cU \setminus \{\xi^2\}}}(y'). \]
Finally, $y(\xi^1)=y(\xi^2)$, hence
\[ {\bf r_{\cU}}(x,y)=\sup_{\xi \in \cU} d(x,y(\xi))= \sup_{\xi \in \cU \setminus \{\xi^2\}} d(x,y(\xi))= 
\sup_{\xi \in \cU \setminus \{\xi^2\}} d(x,y'(\xi))= \br_{\cU \setminus \{\xi^2\}} (x,y'). \]
\end{proof}

Note that depending on the definition of the optimization problem and the uncertainty set $\cU$, often large classes
of scenarios may be dropped. This is in particular the case if the sets $\F(\xi)$ are nested.
\medskip

In the following we are interested in identifying a kind of \textit{core set}
$\cU' \subseteq \cU$ containing a \textit{finite} number of scenarios which
are sufficient to consider in order to solve the recoverable-robust counterpart.
More precisely, we look for a finite set $\cU'$ such that Rec($\cU)$ 
and Rec($\cU')$ have the same recoverable-robust solutions.
\medskip

In the following we consider a polytope $\cU$ with a finite number of extreme
points $\xi^1,\ldots,\xi^N$, i.e., let
\[ \cU=conv(\cU') \ \mbox{ where } \ \cU'=\{\xi^1,\ldots,\xi^N\}. \]

Then many robustness concepts have (under mild conditions) the
following property: Instead of investigating {\it all} $\xi \in \cU$,
it is enough to investigate the extreme points $\xi^1,\ldots,\xi^N$ of
$\cU$.  
For example, for the
strictly robust counterpart RC$(\cU)$ of an uncertain optimization
problem ($P(\xi), \xi \in \cU=conv\{\xi^1,\ldots,\xi^N\}$), 
RC$(\cU)$ is equivalent to RC$(\{\xi^1,\ldots,\xi^N\})$,
if $F(x,\cdot)$ is convex for all $x \in \X$.
\medskip

Unfortunately, a similar result for
the recoverable-robust counterpart does not hold. This means that
the set of Pareto efficient solutions
of Rec($\cU'$) does in general not 
coincide with the set of Pareto efficient solutions
of Rec($\cU$)
with respect to the larger set $\cU=conv(\cU')$ as demonstrated in the
following example.

\medskip

\begin{example}\label{exa1}

Consider the following uncertain optimization problem:
\begin{align*}
\mbox{P($a_1,a_2,b$)} \hspace{1cm} \min\ & f(x_1,x_2) = const \\
\mbox{s.t. } &a_1 x_1 + a_2 x_2 - b = 0\\
& x_1,x_2\in\R,
\end{align*}
where
\[\cU = conv(\cU') \text{ with } \cU' = \{(1,0,0), (0,1,0), (1,1,2)\}.\]
Let the recovery distance
be the Euclidean distance. 
Then $x^* = (2-\sqrt2, 2-\sqrt2)$, the midpoint of the incircle of the triangle that is given by the intersections of
the respective feasible solutions, is a Pareto efficient solution of Rec($\cU')$, 
as it is the unique minimizer of the recovery distance
(see Figure \ref{example-a}).

\begin{figure}[htbp]
\centering
\subfigure[Optimal solution w.r.t. $\cU'$.]{
\label{example-a}
\includegraphics[width = 0.46\textwidth]{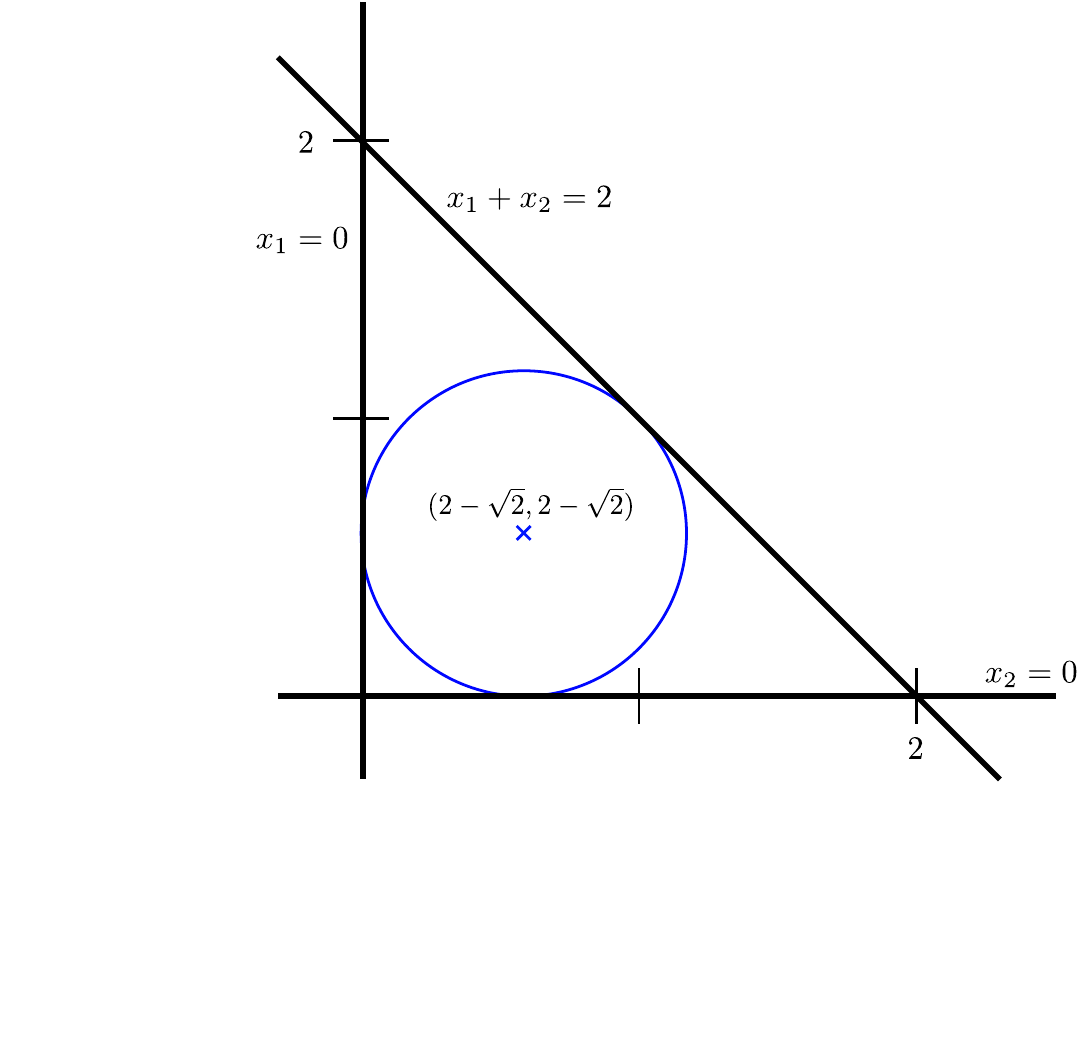} }
\subfigure[Optimal solution w.r.t. $\cU$.]{
\label{example-b}
\includegraphics[width = 0.46\textwidth]{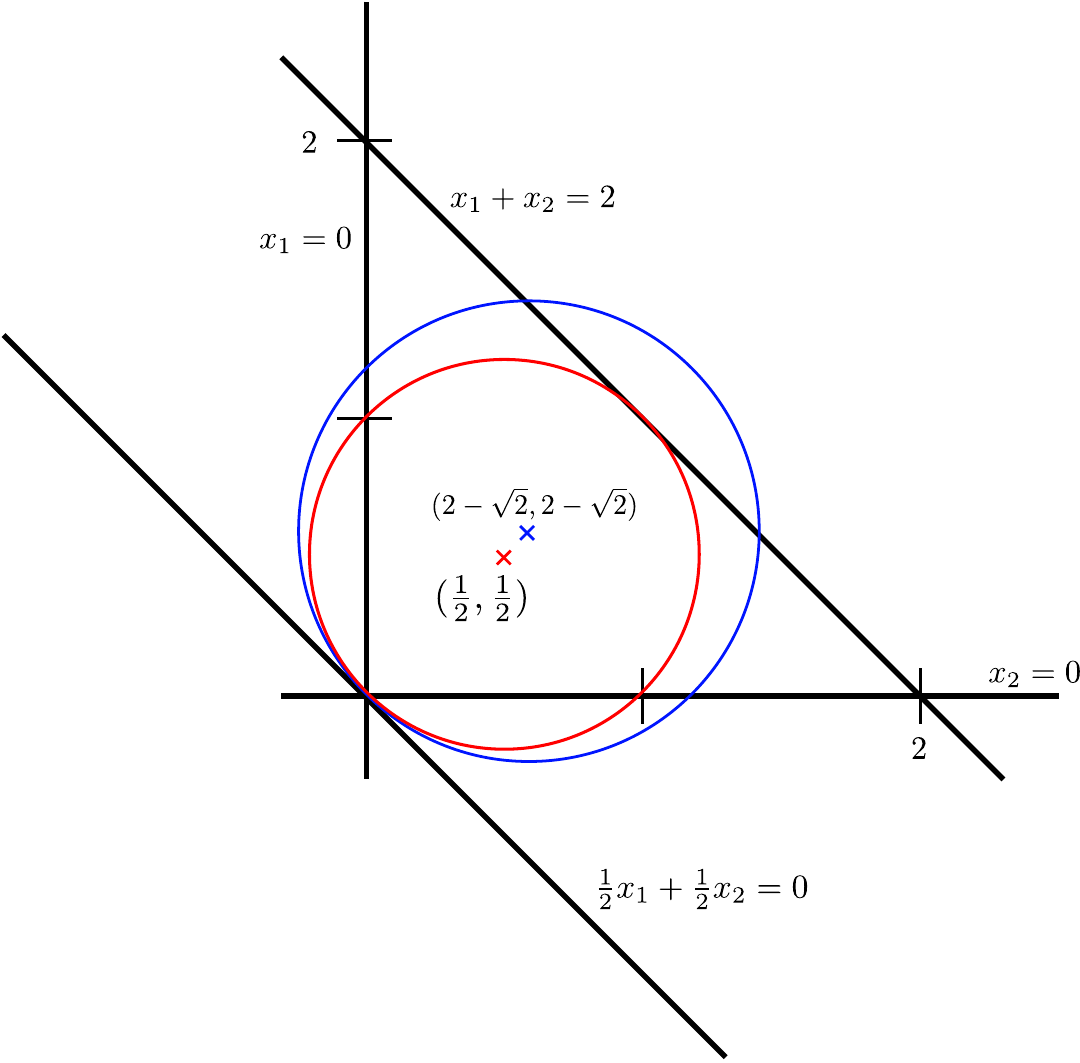} }
\caption{Rec($\cU')$ and Rec($\cU)$ may have different optimal solutions.}
\label{example-ab}
\end{figure}

On the other hand, this solution is not Pareto efficient when the convex hull of $\cU'$ is taken into
consideration. Indeed, by elementary geometry, one finds that
\begin{align*}
r(x^*,\cU) &= \sqrt 2\cdot (2-\sqrt 2) \approx 0.828,\\
r(\bar{x},\cU) &= \frac{1}{\sqrt 2} \approx 0.707,
\end{align*}
where $\bar{x} = (\frac{1}{2},\frac{1}{2})$ (see Figure~\ref{example-b}).
Therefore, solving Rec($\cU')$ does not give the set of Pareto efficient solutions for Rec($\cU)$ .

\end{example}

However, assuming more problem structure, we can give the following result.

\begin{theorem}
\label{th:quasiconvex}
Consider an uncertain optimization problem with
uncertainty set $\cU=conv(\cU')$ with $\cU':=\{\xi^1,\ldots,\xi^N\}$. 
Let $F$
consist of $m$ constraints with $F_i:\R^n \times \cU \to \R$, $i=1,\ldots,m$ and $f: \R^n \times \cU \to \R$
be jointly quasiconvex in the arguments $(y,\xi)$. Let $d(x,\cdot)$ be quasiconvex. 
Let $\X$ be convex.

Then Rec($\cU$) and Rec($\cU'$) have the same set of recoverable-robust solutions.
\end{theorem}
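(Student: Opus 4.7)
The plan is to apply Lemma~\ref{lem-reductionlemma} with the finite core set $\cU'$. So the task reduces to the following: given any feasible $(x, y')$ for Rec($\cU'$), produce an extension $y : \cU \to \X$ that is feasible for Rec($\cU$) and preserves both objective values ${\bf f}$ and ${\bf r}$.

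The natural construction is to convex-combine the recovery solutions. For each $\xi \in \cU$, fix once and for all a representation $\xi = \sum_{k=1}^N \lambda_k(\xi)\, \xi^k$ with $\lambda_k(\xi) \geq 0$ and $\sum_k \lambda_k(\xi) = 1$ (for $\xi = \xi^k$, take the trivial combination so that $y$ agrees with $y'$ on $\cU'$), and set
\[ y(\xi) \;:=\; \sum_{k=1}^N \lambda_k(\xi)\, y'(\xi^k). \]
Since $\X$ is convex and each $y'(\xi^k)\in\X$, we get $y(\xi) \in \X$. The key identity driving everything is that, by construction,
\[ (y(\xi), \xi) \;=\; \sum_{k=1}^N \lambda_k(\xi)\, \bigl(y'(\xi^k), \xi^k\bigr), \]
which expresses $(y(\xi),\xi)$ as a convex combination of the points $(y'(\xi^k),\xi^k)$ inside $\R^n \times \cU$.

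Now I would invoke the standard fact that a (jointly) quasiconvex function $g$ satisfies $g(\sum_k \lambda_k z_k) \leq \max_k g(z_k)$ on any convex combination. Applying this to each $F_i(\cdot,\cdot)$ yields
\[ F_i(y(\xi), \xi) \;\leq\; \max_{k=1,\ldots,N} F_i(y'(\xi^k), \xi^k) \;\leq\; 0, \]
since $(x, y')$ was feasible for Rec($\cU'$); hence $(x,y)$ is feasible for Rec($\cU$). Applying the same fact to $f$ gives
\[ f(y(\xi), \xi) \;\leq\; \max_k f(y'(\xi^k), \xi^k) \;\leq\; {\bf f_{\cU'}}(y'), \]
so ${\bf f_{\cU}}(y) \leq {\bf f_{\cU'}}(y')$; the reverse inequality is immediate because $\cU' \subseteq \cU$ and $y$ agrees with $y'$ on $\cU'$. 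Finally, quasiconvexity of $d(x,\cdot)$ applied to the convex combination defining $y(\xi)$ yields
\[ d(x, y(\xi)) \;\leq\; \max_k d(x, y'(\xi^k)) \;\leq\; {\bf r_{\cU'}}(x, y'), \]
so ${\bf r_\cU}(x,y) = {\bf r_{\cU'}}(x,y')$ by the same argument. All three conditions of Lemma~\ref{lem-reductionlemma} are verified, and the theorem follows.

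The main subtlety I expect is the need to justify the inequality $g(\sum_k \lambda_k z_k) \leq \max_k g(z_k)$ for joint quasiconvexity, which is the standard characterization of quasiconvex functions via their sublevel sets (the sublevel set at level $\max_k g(z_k)$ is convex and contains all the $z_k$, hence contains the convex combination). The other potential concern --- that the assignment $\xi \mapsto (\lambda_k(\xi))_k$ need not be unique or measurable --- is not an issue here because the problem only requires $y$ to be a function $\cU \to \X$, so any pointwise selection of coefficients suffices.
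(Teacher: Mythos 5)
Your proposal is correct and follows essentially the same route as the paper: extend $y'$ to $\cU$ by taking the convex combination $y(\xi)=\sum_k\lambda_k(\xi)\,y'(\xi^k)$ induced by a representation $\xi=\sum_k\lambda_k(\xi)\,\xi^k$, use joint quasiconvexity of $F_i$ and $f$ and quasiconvexity of $d(x,\cdot)$ to bound everything by the corresponding maxima over $\cU'$, and conclude via Lemma~\ref{lem-reductionlemma}. Your explicit remark that one should take the trivial combination at $\xi=\xi^k$ so that $y$ agrees with $y'$ on $\cU'$ is a small point the paper states without justification, but otherwise the arguments coincide.
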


\begin{proof}
Let $(x,y')$ be feasible for Rec($\cU'$). We first define $y:\cU \to \X$.

Let $\xi \in \cU$. Then there exist $\lambda_i, i=1,\ldots,N$ such that 
$\xi= \sum_{i=1}^N \lambda_i \xi^i$ with $\sum_{i=1}^N \lambda_i = 1$ and $\lambda_i \geq 0$.
We set $y(\xi) := \sum_{i=1}^N \lambda_i y'(\xi^i)$. 
Note that this implies $y(\xi^i)=y'(\xi^i)$ for all $i=1,\ldots,N$.
We now check the conditions of Lemma~\ref{lem-reductionlemma}.
\medskip

For every constraint $k=1,\ldots,m$ the joint quasiconvexity implies that
\[ F_k (y(\xi), \xi) = F_k\left(\sum_{i=1}^N \lambda_i y(\xi^i), \sum_{i=1}^N \lambda_i \xi^i \right) \le \max_{i=1,\ldots,N} F_k(y(\xi^i),\xi^i) \le 0 \ \forall k=1,\ldots,m,\]
where the last inequality holds since $y(\xi^i)=y'(\xi^i)$ and $(x,y')$ is feasible for Rec($\cU'$). We hence have that $(x,y)$ is feasible for 
Rec($\cU$).

Analogously, joint quasiconvexity of $f$ implies 
$f( y(\xi), \xi) \le \max_{i=1,\ldots,N} f(y(\xi^i),\xi^i)$ for all $\xi \in \cU$, hence 
\[ {\bf f_{\cU}}(y) = \sup_{\xi \in \cU} f(y(\xi),\xi) = \max_{\xi \in \cU'} f(y(\xi),\xi) = \max_{\xi \in \cU'} f(y'(\xi),\xi) = {\bf f_{\cU'}}(y'). \]
Finally, for the recovery distance $d$ we assumed quasiconvexity in its second argument  which implies
$d(x,y(\xi)) \le \max_{i=1,\ldots,N} d(x,y(\xi^i))$, hence
\[ {\bf r_{\cU}}(x,y) = \sup_{\xi \in \cU} d(x,y(\xi)) = \max_{\xi \in \cU'} d(x,y(\xi)) = \max_{\xi \in \cU'} d(x,y'(\xi)) = {\bf r_{\cU'}}(x,y'). \]

\end{proof}

\medskip

An important particular case of Theorem \ref{th:quasiconvex} is the case in which
\[F(x,\xi) = G(x) -b(\xi)\] for a convex $G$ and concave $b$ (i.e., the uncertainty
is in the right-hand side), since $F$ is then jointly quasiconvex in $(x,\xi)$.

\medskip

\begin{corollary}
\label{theo-right-hand}
Let $\rm(P(\xi), \xi \in \cU)$ be an uncertain optimization problem with
uncertainty set $\cU=conv(\cU')$ with $\cU':=\{\xi^1,\ldots,\xi^N\}$. Let
$F(x,\xi)=G(x)-b(\xi)$ with
a convex function $G:\R^n \to \R^m$ and a concave function
$b(\xi):\R^M \to \R^m$. Let $f(x,\xi)$ be jointly quasiconvex, $\X$ be convex, and let $d(x,\cdot)$ be quasiconvex. Then 
Rec($\cU)$ and Rec($\cU')$ have the same recoverable-robust solutions.
\end{corollary}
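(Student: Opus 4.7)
The plan is to derive the corollary directly from Theorem~\ref{th:quasiconvex}: essentially all hypotheses of that theorem are assumed here as well, with the single exception of joint quasiconvexity of $F$, which is not part of the corollary's hypotheses. So the entire argument reduces to verifying that the special structure $F(x,\xi)=G(x)-b(\xi)$ with $G$ convex and $b$ concave yields joint quasiconvexity in $(x,\xi)$.

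First I would fix a component $F_k(x,\xi) = G_k(x) - b_k(\xi)$, $k=1,\ldots,m$. Since $G_k:\R^n\to\R$ is convex, it is also convex when viewed as a function on $\R^n\times\R^M$ that ignores the $\xi$-coordinate. Similarly, since $b_k$ is concave, the map $(x,\xi)\mapsto -b_k(\xi)$ is convex on $\R^n\times\R^M$. The sum of two convex functions is convex, so $F_k$ is jointly convex in $(x,\xi)$, and hence in particular jointly quasiconvex. This holds for every component $k=1,\ldots,m$.

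With joint quasiconvexity of $F$ established, and $f$, $d(x,\cdot)$, $\X$ already assumed quasiconvex / convex in the corollary's hypotheses, all assumptions of Theorem~\ref{th:quasiconvex} are satisfied. Invoking that theorem yields that Rec($\cU$) and Rec($\cU'$) have the same set of recoverable-robust solutions, which is exactly the claim.

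I do not anticipate any genuine obstacle: the verification of joint convexity of $F$ from the separable structure $G(x)-b(\xi)$ is essentially immediate, and the rest is a direct citation of Theorem~\ref{th:quasiconvex}. The only subtle point worth flagging explicitly in the writeup is that convexity (not merely quasiconvexity of the two summands separately) is what allows the componentwise argument to go through, since quasiconvexity is not in general preserved under addition even in a single variable; here we obtain joint convexity for free because both summands extend to convex functions on the product space, and convexity is preserved under addition.
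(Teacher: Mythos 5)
Your proposal is correct and follows exactly the route the paper intends: the paper presents this corollary as an immediate consequence of Theorem~\ref{th:quasiconvex}, justified by the one-line observation that $F(x,\xi)=G(x)-b(\xi)$ is jointly (quasi)convex in $(x,\xi)$ when $G$ is convex and $b$ is concave. Your componentwise verification of joint convexity, and the remark that convexity (rather than quasiconvexity of the summands) is what makes the sum argument work, simply fills in the details the paper leaves implicit.
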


\medskip

We remark that $G$ must not depend on the scenario $\xi$. 
Example~\ref{exa1} shows that Corollary~\ref{theo-right-hand}
is not even true for a linear function $F(x,\xi)=A(\xi)x-b(\xi)$: 
If the matrix $A$ is dependent on $\xi$, we cannot conclude that 
Rec($\cU)$  and Rec($\cU')$ have the same recoverable-robust solutions.

Note that Corollary~\ref{theo-right-hand} applies in particular for the special case
where
$b(\xi)=\xi$,
i.e., for uncertain convex optimization problems of the type
\begin{equation}
\label{conv2}
 {\rm P}(b)  \ \min_{x \in \R^n} \{f(x): G(x)\leq b\}.
\end{equation}
In particular we know for \rm P($b$) that the center with respect to some finite set $\cU'$ solves the
uncertain problem with respect to $\cU=conv(\cU')$.

This means we can use the finite set $\cU'$ instead of $\cU$ when solving (Rec) if the
conditions of the previous theorem apply. This is summarized next.

\medskip

\begin{corollary}
\label{cor-right-hand}
Let $\rm(P(\xi), \xi \in \cU)$ be an uncertain optimization problem with
uncertainty set $\cU=conv(\cU')$ with $\cU':=\{\xi^1,\ldots,\xi^N\}$ and with
constraints $F(x,\xi)=G(x)-b(\xi)$ with
a convex function $G:\R^n \to \R^m$ and a concave 
function $b(\xi):\R^M \to \R^m$. Let $\X\subseteq \R^n$ be convex, let $f$ be jointly convex, and let $d(x,\cdot)$ be convex.
Then (Rec) can be formulated as the following convex biobjective program:
\begin{equation}
\label{right-hand}
\begin{array}{lllll}
 \min & (r,z) \\
\mbox{s.t. } & G(y^k) & \leq  & b(\xi^k) & \mbox{ for all } k=1,\ldots,N\\
& d(x, y^k ) & \leq & r & \mbox{ for all } k=1,\ldots,N\\
& f(y^k, \xi^k ) & \leq & z & \mbox{ for all } k=1,\ldots,N\\
&  x, y^k & \in & \X & \mbox{ for all } k=1,\ldots,N \\
& r,z &\in &\R \\
\end{array}
\end{equation}
\end{corollary}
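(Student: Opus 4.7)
The plan is to chain the reduction from the convex hull $\cU$ to its extreme points $\cU'$ (Corollary~\ref{theo-right-hand}) with the standard epigraph reformulation of max-type objectives in the finite-scenario case. First I would verify the hypotheses of Corollary~\ref{theo-right-hand}: joint convexity of $f$ implies joint quasiconvexity, and convexity of $d(x,\cdot)$ implies quasiconvexity, so Rec($\cU$) and Rec($\cU'$) share the same set of recoverable-robust solutions. This step replaces the (possibly infinite) supremum over $\cU$ by a maximum over the $N$ extreme scenarios in $\cU'$.

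Second, because $\cU'$ is finite, a recovery map $y:\cU'\to\X$ is nothing more than an $N$-tuple of vectors $y^k:=y(\xi^k)$, $k=1,\ldots,N$. The constraint $F(y(\xi^k),\xi^k)\le 0$ becomes $G(y^k)\le b(\xi^k)$, which is the first block of constraints in \eqref{right-hand}. The two component objectives $\br_{\cU'}(x,y)=\max_{k} d(x,y^k)$ and $\f_{\cU'}(y)=\max_{k} f(y^k,\xi^k)$ are then handled by the usual epigraph trick: introduce scalar variables $r$ and $z$, add the pointwise constraints $d(x,y^k)\le r$ and $f(y^k,\xi^k)\le z$, and minimize the pair $(r,z)$. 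The equivalence is the standard observation that the componentwise maximum of a finite family of real numbers is attained, so $(x,y)$ is (weakly) Pareto efficient for Rec($\cU'$) if and only if the tuple $(x,y,r,z)$ with $r=\br_{\cU'}(x,y)$ and $z=\f_{\cU'}(y)$ is (weakly) Pareto efficient in \eqref{right-hand}.

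The remaining task is to verify convexity of the resulting system. The set $\{y^k\in\X : G(y^k)\le b(\xi^k)\}$ is convex since $G$ is convex and $b(\xi^k)$ is a fixed vector, and $\X$ is convex by hypothesis. The constraint $f(y^k,\xi^k)\le z$ defines a convex set in $(y^k,z)$, because $f(\cdot,\xi^k)$ inherits convexity from the joint convexity of $f$. The main (mild) obstacle is the constraint $d(x,y^k)\le r$: convexity of this constraint calls for joint convexity of $d$ in both arguments, not only in the second one. For any metric derived from a norm $\|\cdot\|$, which is the running setting of the paper, $d(x,y)=\|x-y\|$ is jointly convex as the composition of the norm with the linear map $(x,y)\mapsto x-y$, so the constraint is convex in $(x,y^k,r)$. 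All remaining constraints are linear, so \eqref{right-hand} is a convex biobjective program equivalent to Rec($\cU$).
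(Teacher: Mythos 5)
Your proposal is correct and follows exactly the route the paper intends (the corollary is stated without proof as the combination of Corollary~\ref{theo-right-hand}, which reduces $\cU$ to its extreme points, with the finite-scenario epigraph formulation~\eqref{form:recfeas-finite}). Your additional observation is a genuine refinement: the stated hypothesis that $d(x,\cdot)$ is convex in the second argument suffices for the reduction step but not, as written, for convexity of the constraint $d(x,y^k)\le r$ jointly in $(x,y^k)$; your resolution via joint convexity of norm-induced metrics $d(x,y)=\|x-y\|$ is the reading the paper implicitly relies on.
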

\medskip

Combining this corollary with Theorem~\ref{theo-finite} from
Section~\ref{sec-poly-finite}, we obtain the following result: The
recoverable-robust counterpart of an optimization problem with
convex uncertainty which is only in its right-hand side and with
polyhedral uncertainty set can be formulated as a linear program if a
block norm is used to measure the recovery costs.  In particular,
the recoverable-robust counterpart of such a linear program under
polyhedral uncertainty sets and block norms as distance functions remains a
linear program.

\medskip

\begin{theorem}\label{theo-together}
Let $\rm(P(\xi), \xi \in \cU)$ be an uncertain linear program with concave uncertainty only in the right-hand side, and $\cU=conv(\cU')$ with $\cU':=\{\xi^1,\ldots,\xi^N\}$. 
Let $d$ be derived from a block norm. Then, (Rec) can be formulated as a linear biobjective program.

If the terms defining $\X$ and either the number
of extreme points or the number of facets of the unit ball of the block norm 
depend at most polynomially on the
dimension $n$, then the problem {\Peps} be solved in polynomial time.
\end{theorem}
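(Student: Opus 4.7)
The plan is to prove this by combining two earlier results: Corollary~\ref{cor-right-hand}, which lets us collapse the uncertainty set $\cU$ to its finite vertex set $\cU'$ when the uncertainty is only a concave right-hand side and the remaining data are convex, and Theorem~\ref{theo-finite} (together with its linear-programming construction in Section~\ref{sec-poly-finite}), which turns the finite-scenario problem into a linear program whenever the recovery distance comes from a block norm.

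First I would check that the hypotheses of Corollary~\ref{cor-right-hand} are satisfied by an uncertain linear program with concave right-hand side uncertainty. A linear program is of the form $f(x,\xi)=c^{t}x$ and $F(x,\xi)=Ax-b(\xi)$, which is convex (in fact linear) in $x$, while the right-hand side $b(\xi)$ is concave in $\xi$ by assumption. The objective $c^{t}x$ is jointly convex in $(x,\xi)$, and every block norm $\|\cdot\|$ induces a convex $d(x,\cdot)$. Hence Corollary~\ref{cor-right-hand} applies and (Rec($\cU$)) has the same recoverable-robust solutions as the finite-scenario biobjective program
\begin{equation*}
\begin{array}{lllll}
\min & (r,z) \\
\mbox{s.t. } & A y^k & \leq & b(\xi^k) & \mbox{ for all } k=1,\ldots,N\\
& d(x,y^k) & \leq & r & \mbox{ for all } k=1,\ldots,N\\
& c^{t}y^k & \leq & z & \mbox{ for all } k=1,\ldots,N\\
& x,y^k & \in & \X & \mbox{ for all } k=1,\ldots,N.
\end{array}
\end{equation*}

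Next, I would linearize the distance constraints $d(x,y^k)\leq r$ exactly as in the LP formulation \eqref{lp1}--\eqref{lp5} of Section~\ref{sec-poly-finite}: either by introducing non-negative coefficients $\beta_{i}^{k}$ on the extreme points $e_i$ of the unit ball $B$ together with constraints $y^k-x=\sum_{i=1}^{S}\beta_{i}^{k}e_{i}$ and $\sum_{i=1}^{S}\beta_{i}^{k}\leq r$, or by using the dual-norm extreme points $e_i^0$ with constraints $(e_i^0)^{t}(y^k-x)\leq r$ for all $i,k$. By Lemma~\ref{thm:poly-distance} either representation gives exactly $\|x-y^k\|\leq r$. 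All remaining constraints are linear, so the full biobjective program is indeed a linear biobjective program, which proves the first claim.

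For the polynomial-time statement, note that for fixed $\varepsilon$ the problem {\Peps} (equivalently \eqref{form:recfeas-finite} for $\cU'$) becomes the single-objective LP obtained by replacing $z$ with the constant $\varepsilon$, and its size is polynomial in $n$: there are $N$ copies of constraints of sizes bounded by the descriptions of $\X$ and of $A,b,c$, and either $S$ variables $\beta_i^k$ per scenario or $S^{0}$ dual-norm inequalities per scenario. Under the hypothesis that the number of constraints defining $\X$ and either $S$ or $S^{0}$ grow only polynomially in $n$, the overall LP has polynomial size in $n$ and in $N$, so it can be solved in polynomial time by any polynomial-time LP method. The main subtlety worth double-checking is that the reduction from $\cU$ to $\cU'$ via Corollary~\ref{cor-right-hand} really preserves the whole Pareto set (not merely some efficient points), but this is exactly what Lemma~\ref{lem-reductionlemma} and the proof of Theorem~\ref{th:quasiconvex} deliver, so no additional argument is needed.
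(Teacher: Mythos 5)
Your proof is correct and follows essentially the same route as the paper: reduce $\cU$ to its finite vertex set $\cU'$ via Corollary~\ref{theo-right-hand} (the reduction result, rather than Corollary~\ref{cor-right-hand}, which gives the convex formulation) and then apply the block-norm linear-programming construction of Theorem~\ref{theo-finite}. You merely spell out the hypothesis checks, the explicit LP, and the size count that the paper leaves implicit.
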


\begin{proof}
According to Theorem~\ref{theo-right-hand} we can replace $\cU$ by the finite set
$\cU'$ in the recoverable-robust counterpart, i.e., we consider Rec($\cU'$)
instead of Rec($\cU$).
We are hence left with a problem for which the assumptions
of Theorem~\ref{theo-finite} are satisfied yielding a formulation as linear program.
\end{proof}

\medskip

Note that many practical applications satisfy the conditions
of Theorem~\ref{theo-together}. Among these are scheduling and timetabling
problems where the uncertainty is the length of the single tasks to be completed
and hence in the common linear formulations in the right-hand side. We refer to 
\cite{Atmos2010-goerigk} for applications in
timetabling, to \cite{herroelen2005} for project scheduling, to \cite{Savel09} for container 
repositioning, and to
\cite{rrcg} for knapsack problems.

\section{Numerical experiments}
\label{sec:exp}

In the following, we analyze the difference between our scalarization 
{\Peps} and the ''classic'' scalarization {\Pdelta}
to calculate the Pareto front of an uncertain portfolio optimization problem 
using computational experiments.

\subsection{Problem setting}

We consider a portfolio problem of the form
\begin{align*}
\max\ &\sum_{i=1}^n p_i x_i \\
\text{s.t. } & \sum_{i=1}^n x_i = 1\\
& x\ge 0
\end{align*}
where variable $x_i$ denotes the amount of investment in opportunity $i\in\{1,\ldots,n\}$ with profit $p_i$. We assume that profits are uncertain and stem from a finite uncertainty set $\cU = \{p^1,\ldots,p^N\} \subseteq \R^n_+$. 
The biobjective recoverable-robust model we would like to solve is the following:
\begin{align*}
&(\max\ z,\ \min\ d) \\
\text{s.t. } & z \le \sum_{i=1}^n p^k_i x^k_i & \forall k=1,\ldots,N \\
& \sum_{i=1}^n x_i = 1\\
& \sum_{i=1}^n x^k_i = 1 & k=1,\ldots,N\\
& \sum_{i=1}^n (x_i - x^k_i)^2 \le d & \forall k=1,\ldots,N \\
& x,x^k \ge 0
\end{align*}
In this setting, we would like to fix some choice of investment $x$ now, but can modify it, once the scenario becomes known. Our aim is to maximize the resulting worst-case profit, and also to minimize the modifications to our investment, which we measure by using the Euclidean distance.

We compare the two $\varepsilon$-constraint approaches, where either a fixed budget on $d$ is given {\Pdelta}, or a budget on 
$z$ is given {\Peps}.

Moreover, we consider the following iterative \emph{projection method} as another solution approach to {\Peps}
It is based on the method of alternating projections. Say we have some candidate solution $x$ available. For every scenario $k$, we want to find a solution $x^k$ that is as close to $x$ as possible, and also respects a desired profit bound $P$. The resulting problems are independent for every $k$. For a fixed $k$, it
can be formulated as the following quadratic program:
\begin{align*}
\min\ &\sum_{i=1}^n (x^k_i - x_i)^2 \\
\text{s.t. } & \sum_{i=1}^n x^k_i = 1 \\
& \sum_{i=1}^n p^k_i x^k_i \ge P \\
& x^k \ge 0
\end{align*}
Having calculated all points $x^k$, we then proceed to find a new solution $x'$ that is as close to all points $x^k$ as possible:
\begin{align*}
\min \ & d \\
\text{s.t. } & \sum_{i=1}^n x'_i = 1\\
& \sum_{i=1}^n (x'_i - x^k_i)^2 \le d & \forall k=1,\ldots,N \\
& x' \ge 0
\end{align*}
We then repeat the calculation of closest points, until the change in objective value is sufficiently small. In this setting, the projection method is known to converge to an optimal solution (see, e.g., \cite{dattorro2010convex,marcdiss})

\subsection{Instances and computational setting}

We consider instances with $n=5,10,15,20,25,30$ and $N=5,10,15,20,25,30$, where we generate 100 instances for each setting of $n$ and $N$ (i.e., a total of $6\cdot6\cdot100 = 3600$ instances were generated). An instance is generated by sampling uniformly randomly values for $p^k_i$ in the range $\{1,\ldots,100\}$.

For each instance, we first calculate the two lexicographic solutions with respect to recovery distance and profit. Then the following problems were solved:
\begin{itemize}
\item We solve the classic scalarization, {\Pdelta}, i.e., (Rec)
with bounds on the recovery distance, 
where the bounds are calculated by choosing 50 equidistant points within the relevant region given by the lexicographic solutions. This approach is denoted as Rec-P.

\item For solving the new scalarization, i.e., {\Peps}, we used three different
approaches:

\begin{itemize}
\item Using also 50 equidistant bounds on the profit, we solve recoverable-robust problems {\Peps} directly. This approach is denoted as Rec-D.

\item In the same setting as for Rec-D, we use the iterative projection algorithm instead of solving the recovery problem directly with Cplex. This is denoted as Rec-It.

\item Finally, as preliminary experiments showed that Rec-It is especially fast if the bound on the profit $P$ is large, we used a mixed approach that uses Rec-D for the 2/3 smallest bounds on $P$, and Rec-It for the 1/3 largest bounds on $P$. This is denoted as Rec-M.
\end{itemize}

\end{itemize}

We used Cplex v.12.6 to solve the resulting quadratic programs. The experiments were conducted on a computer with a 16-core Intel Xeon E5-2670 processor, running at 2.60 GHz with 20MB cache, and Ubuntu 12.04. Processes were pinned to one core.

\subsection{Results}

We show the average computation times for the biobjective portfolio problem in Table~\ref{tabresults}.

\begin{table}[htbp]
\centering
%\begin{small}
\begin{tabular}{r|r|rrrr}
$n$ & $N$ & Rec-P & Rec-D & Rec-It & Rec-M \\
\hline
\multirow{5}{*}{5} &  5  &  {\bf 0.29}  &  0.32  &  1.70  &  0.48 \\
 &  10  &  {\bf 0.48}  &  0.56  &  2.56  &  0.77 \\
 &  15  &  {\bf 0.74}  &  0.91  &  3.43  &  1.16 \\
 &  20  &  {\bf 0.99}  &  1.15  &  3.78  &  1.40 \\
 &  25  &  {\bf 1.26}  &  1.49  &  4.14  &  1.75 \\
 &  30  &  1.55  &  1.86  &  5.30  &  2.18 \\
\hline
\multirow{5}{*}{10}  &  5  &  {\bf 0.57}  &  0.62  &  3.31  &  0.74 \\
  &  10  &  {\bf 1.45}  &  1.53  &  6.22  &  1.67 \\
  &  15  &  2.70  &  {\bf 2.59}  &  8.60  &  2.79 \\
  &  20  &  4.42  &  {\bf 4.11}  &  13.15  &  4.33 \\
  &  25  &  {\bf 3.70}  &  4.12  &  17.95  &  4.99 \\
  &  30  &  {\bf 4.47}  &  5.04  &  21.36  &  6.38 \\
\hline
\multirow{5}{*}{15} &  5  &  {\bf 0.85}  &  0.96  &  5.08  &  1.04 \\
  &  10  &  2.85  &  2.97  &  8.62  &  {\bf 2.84} \\
  &  15  &  5.46  &  5.13  &  14.82  &  {\bf 4.94} \\
  &  20  &  10.85  &  9.16  &  25.65  &  {\bf 8.80} \\
  &  25  &  18.08  &  14.56  &  32.12  &  {\bf 13.31} \\
  &  30  &  {\bf 10.37}  &  20.83  &  46.30  &  19.07 \\
\hline
\multirow{5}{*}{20}   &  5  &  {\bf 1.19}  &  1.25  &  6.74  &  1.33 \\
  &  10  &  4.86  &  5.08  &  13.60  &  {\bf 4.50} \\
  &  15  &  11.23  &  10.03  &  25.10  &  {\bf 8.91} \\
  &  20  &  20.48  &  13.22  &  34.78  &  {\bf 12.27} \\
  &  25  &  30.02  &  22.81  &  49.34  &  {\bf 19.98} \\
  &  30  &  44.38  &  36.88  &  65.80  &  {\bf 31.45} \\
\hline
\multirow{5}{*}{25}   &  5  &  1.57  &  {\bf 1.51}  &  8.08  &  1.59 \\
  &  10  &  5.06  &  {\bf 4.22}  &  19.55  &  4.23 \\
  &  15  &  10.58  &  8.62  &  29.81  &  {\bf 8.35} \\
  &  20  &  19.04  &  15.10  &  46.93  &  {\bf 14.19} \\
  &  25  &  35.82  &  28.18  &  75.60  &  {\bf 26.09} \\
  &  30  &  53.97  &  42.80  &  102.47  &  {\bf 38.49} \\
\hline
\multirow{5}{*}{30}   &  5  &  2.02  &  {\bf 1.83}  &  9.77  &  1.84 \\
  &  10  &  6.27  &  {\bf 4.98}  &  25.59  &  5.16 \\
  &  15  &  13.44  &  {\bf 10.29}  &  45.68  &  10.32 \\
  &  20  &  24.04  &  {\bf 18.31}  &  71.05  &  18.44 \\
  &  25  &  39.49  &  29.53  &  101.90  &  {\bf 28.90} \\
  &  30  &  68.43  &  51.67  &  145.12  &  {\bf 47.77} \\
\end{tabular}
%\end{small}
\caption{Average computation times in $s$ to calculate Pareto solutions.}\label{tabresults}
\end{table}

The best average computation time per row is printed in bold. Note that Rec-It requires higher computation times than any other approach; however, in combination with Rec-D (i.e., Rec-M), it is highly competitive. While Rec-P performs well for smaller instances, Rec-D and Rec-M perform best for larger instances.

There are some surprises in Table~\ref{tabresults}, which are not due to outliers. For Rec-P and $n=10$, one can see that solving $N=20$ takes longer than solving $N=25$. The same holds for $n=15$, $N=25$ and $N=30$. Also, for $N=15$, we find that Rec-P is faster for $n=25$ than for $n=20$ (the same holds for Rec-D). This behavior disappears for large $n$ and $N$.

Summarizing, our experimental results show that switching perspective from the classic recoverable-robust approach {\Pdelta} that maximizes the profit subject to some fixed recovery distance to the {\Peps} approach we suggest, in which the distance is minimized subject to some bound on the profit, results in improved computation times. These computation times are further improved by applying methods from location theory, that can allow the {\Peps} version to be solved more efficiently.

\section{Summary and conclusion}
\label{sec-conclusion}

In this paper, we introduced a location-analysis based point of view to the problem of finding recoverable-robust solutions to uncertain optimization problems.
Table~\ref{tab-1} summarizes the results we obtained.
\smallskip

\begin{table}
\begin{adjustwidth}{-2cm}{-2cm}
{\small
\begin{tabular}{p{2cm}|p{2cm}|p{2cm}|p{2cm}|p{2.2cm}|p{4cm}}
uncertainty & constraints     &  uncertainty  & rec. costs  & deterministic  & results \\
set $\cU$   & $F(\cdot, \xi)$ & $F(x, \cdot)$ & $d$         & constraints $\X$ & \\[3pt]
\hline \hline
 & & & & & \\[-4pt]
finite     & quasiconvex       &  arbitrary   & convex & convex and closed &- {\Peps} convex problem (Lemma~\ref{lem-convex}) \\[2mm]
           &                   &              &                       & $\X=\R^n$          &- Reduction to (Rec($\bar{\cU}$) for
						                         smaller sets $\bar{\cU}$ (Theorem~\ref{theo-helly}) \\[2pt]
\hline
& & & & &\\[-4pt]
finite     & linear            &  arbitrary   & block norm & polyhedron &- {\Peps} linear problem (Theorem~\ref{theo-finite}) \\[2pt]
\hline
& \multicolumn{2}{p{3cm}|}{} & & &\\[-4pt]
& \multicolumn{2}{p{3cm}|}{} & & & \\[-4pt]
polyhedron & \multicolumn{2}{c|}{jointly quasiconvex}& convex & closed & - Pareto solution w.r.t. extreme points
of $\cU$ is Pareto (Theorem~\ref{th:quasiconvex})\\[2pt]
\hline
& & & & &\\[-4pt]
polyhedron & convex            &  quasiconvex, right-hand & convex & closed & - solution w.r.t extreme points
 of $\cU$ is Pareto (Corollary~\ref{theo-right-hand}) \\[2mm]
                  &            &         side    &      & convex and closed&  - {\Peps} convex problem (Corollary~\ref{cor-right-hand})\\[2pt]
\hline
& & & & &\\[-4pt]
polyhedron &  linear            & quasiconvex, right-hand side& block norm & polyhedron &  - {\Peps} linear problem
 (Theorem~\ref{theo-together}) \\[2pt]

\hline \hline
\end{tabular}
}
\end{adjustwidth}
\caption{\label{tab-1} Summary of properties of (Rec) and {\Peps} depending on the optimization problem
$\rm P(\xi)$, the uncertainty set $\cU$, the type of uncertainty, and the recovery costs.}
\end{table}

The following variation of (Rec) should be mentioned:
In many cases it might not be appropriate to just look at the worst-case objective
function of the recovered solutions, because there might be one very bad scenario which is
the only relevant one. Pareto efficient solutions would hence neglect the objective function 
values of all other scenarios.

This might lead to another goal, namely to be as close as possible to an optimal
solution in \emph{all} scenarios instead of only looking at a few scenarios which will be
very bad anyway. This leads to the following problem in which we bound the difference
between the objective value of the recovered solution and the best possible objective
function value in the worst case:

\begin{eqnarray*}
(\widehat{{\rm Rec}}) \hspace{5mm} & {\rm minimize} \ 
\left( {\bf \hat{f}}(y), {\bf r}(x,y) \right)=
\left( \sup_{\xi \in \cU} f(y(\xi),\xi)-f^*(\xi), \sup_{\xi \in \cU} d(x,y(\xi)) \right)\\
& \mbox{s.t. } F(y(\xi),\xi) \leq 0\ \ \mbox{ for all } \xi \in \cU\\
& x \in \X, y:\cU \to \X
\end{eqnarray*}

The new objective function ${\bf \hat{f}}$ in ($\widehat{{\rm Rec}}$) can be interpreted 
as a minmax-regret approach as described in \cite{KouYu97}. 
Again, we can look at the scalarizations of this problem. Instead of \Peps we receive

\begin{eqnarray*}
(\widehat{{\rm Rec}}(\varepsilon))
\hspace{1cm} & {\rm minimize} \ \sup_{\xi \in \cU} d(x,y(\xi))\\
\mbox{s.t. } 
& f(y(\xi),\xi) - f^*(\xi) \leq \varepsilon\ \ \mbox{ for all } \xi \in \cU\\
& F(y(\xi),\xi) \leq 0\ \ \mbox{ for all } \xi \in \cU\\
&x \in \X, y:\cU \to \X
\end{eqnarray*}
In case that $f^*(\xi)$ is known for all $\xi \in \cU$, ($\widehat{{\rm Rec}}(\varepsilon)$)
admits similar properties as {\Peps}.  
\medskip

Note that the lexicographic solution of ($\widehat{{\rm Rec}}(\varepsilon)$) with respect to
(${\bf \hat{f}},{\bf r}$) requires to find optimal solutions for each scenario $\xi \in \cU$ 
which can be reached with minimal recovery costs. It can be found by solving
$(\widehat{{\rm Rec}}(0))$. This is exactly the robustness
approach \emph{recovery-to-optimality} which has been described in \cite{Goerigk20141},
see \cite{Atmos2010-goerigk,Tapas2011-Goerigk-Sch} for scenario-based approaches
for its solution. On the other hand, the lexicographic solution of ($\widehat{{\rm Rec}}(\varepsilon)$) 
with respect to (${\bf r}, {\bf \hat{f}}$) is related to minmax regret robustness.

\medskip

Ongoing research includes the analysis of other special cases of (Rec) as well as its application 
to other types of problems e.g. from traffic
planning or evacuation. We also work on generalizations to multi-objective uncertain optimization
problems as already done for several minmax robustness concepts \cite{EIS13}.

\newcommand{\etalchar}[1]{$^{#1}$}

\end{document}